\theoremstyle{plain}
\newtheorem{theorem}{Theorem}
\newtheorem{lemma}{Lemma}
\newtheorem{proposition}{Proposition}
\newtheorem*{corollory}{Corollary}
\theoremstyle{definition}
\newtheorem{defn}{Definition}
\newtheorem*{conj}{Conjecture}
\theoremstyle{remark}
\title{On continuous movement of the discrete spectrum of Schr\"odinger operators}
\author[1]{M. N. N. Namboodiri}
\author[2]{S. Satheesh Kumar}
\affil[1]{Department of Mathematics, Cochin University of Science and Technology, Kochi - 22, India}
\affil[2]{Naval Physical and Oceanographic Laboratory, Kochi - 21, India}
\date{}
\begin{document}
\maketitle

\begin{abstract}
Continuous movement of discrete spectrum of the Schr\"{o}dinger operator $H(z)=-\frac{d^2} {dx^2}+V_0+z V_1$, with $\int_0^\infty {x |V_j(x)| dx} < \infty$, on the half-line is studied as $z$ moves along a continuous path in the complex plane. The analysis provides information regarding the members of the discrete spectrum of the non-selfadjoint operator that are evolved from the discrete spectrum of the corresponding selfadjoint operator.
\end{abstract}

\section{Introduction}\label{Ch2:Sec:Introduction}
We consider the operator valued analytic function
\begin{equation}
H(z)=-{\frac{d^2}{dx^2}}+V_0 + z V_1
\end{equation}
defined on the complex plane $\mathbb{C}$, where $V_0$, $V_1$ are real valued bounded measurable functions vanishing sufficiently rapidly as $|x| \rightarrow \infty$. $H(z)$ is a Schr\"odinger operator on $L^2(0,\infty)$ with domain $\text{Dom}(H(z))=\{f \in L^2(0,\infty): f', f'' \in L^2(0,\infty), f(0)=0\}$. In this work, continuous evolution of the discrete spectrum of $H(z)$ is studied as $z$ varies along a continuous path in $\mathbb{C}$. In particular, as $z$ varies along the imaginary line from $0$ to $i$, the evolution of the discrete spectrum of the self-adjoint operator $H(0)=-{\frac{d^2}{dx^2}}+V_0$ is of special interest to us.

The distribution of the discrete spectrum of self-adjoint Schr\"odinger operator has been studied extensively, the same is not the case with non-selfadjoint Schr\"odinger operator. The spectral theorem and min-max principles for the selfadjoint case play major role in its theoretical development, whereas such tools are not available for non-selfadjoint operators. Each non-selfadjoint problem needs to be studied separately. Our effort to extract some information on the discrete spectrum of the non-selfadjoint Schr\"odinger operator
\begin{equation}\label{nonSelfEqn}
H(i)=-{\frac{d^2}{dx^2}}+V_0+iV_1
\end{equation}
using the discrete spectrum of corresponding self-adjoint Schr\"odinger operator
\begin{equation}\label{selfEqn}
H(0)=-{\frac{d^2}{dx^2}}+V_0
\end{equation}
is quite a different approach and we intend to prove the following result.
\setcounter{theorem}{1}
\begin{theorem}\label{labMainTheorem}
If $V_0$ and $V_1$ are such that $\int_{0}^{\infty}{x |V_j(x)| dx} < \infty$, $j=0,1$ and let $\kappa_1$ be in the discrete spectrum of $H(i)$ then there exist $(1)$ $t_0$ $(0\le t_0<1)$, $(2)$ a real number $\kappa_0$, a member in the discrete spectrum or a spectral singularity of the operator $H(i t_0)$ and $(3)$ a continuous path $\kappa(t)$ such that $\kappa(0)=\kappa_0$, $\kappa(1)=\kappa_1$ and each $\kappa(t)$, $0<t \le 1$, is a discrete eigenvalue of the operator $H(i(t_0+t))$.
\end{theorem}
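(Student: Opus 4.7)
The plan rests on the Jost function formalism. Under the hypothesis $\int_0^\infty x|V_j(x)|\,dx<\infty$, for every $z\in\mathbb{C}$ the equation $-f''+(V_0+zV_1)f=k^2f$ admits a Jost solution $f(x,k;z)\sim e^{ikx}$ built from the standard Volterra integral equation, and the Volterra estimates show that the Jost function $F(k,z):=f(0,k;z)$ is analytic in $k$ on the open upper half-plane $\operatorname{Im}k>0$, continuous up to $\mathbb{R}\setminus\{0\}$, and entire in $z$. The familiar dictionary identifies the discrete spectrum of $H(z)$ with $\{k^2:F(k,z)=0,\ \operatorname{Im}k>0\}$ and its spectral singularities with $\{k^2:F(k,z)=0,\ k\in\mathbb{R}\setminus\{0\}\}$, so the problem reduces to tracking zeros of the two-parameter analytic function $(k,s)\mapsto F(k,is)$.

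Given $\kappa_1$ in the discrete spectrum of $H(i)$, I would fix $k_1$ with $\operatorname{Im}k_1>0$ and $k_1^2=\kappa_1$, so that $F(k_1,i)=0$, and consider the largest half-open subinterval $(t_0,1]\subset[0,1]$ on which a continuous branch $s\mapsto k(s)$ of zeros of $F(\cdot,is)$ with $k(1)=k_1$ and $\operatorname{Im}k(s)>0$ exists. Near $s=1$ such a branch is produced by the analytic implicit function theorem when $\partial_kF(k_1,i)\neq 0$, and by a Weierstrass preparation / Puiseux parametrization in the case of a multiple zero; at interior points where $k(s)$ coincides with another zero of $F(\cdot,is)$, Rouch\'e's theorem on a small shrinking contour around $k(s)$ provides a continuous selection. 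This is the standard mechanism for continuously labelling roots of a parametrized analytic family.

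To identify $t_0$ and $\kappa_0$, I would first show that $\{k(s):s\in(t_0,1]\}$ is precompact in the closed upper half-plane. The Volterra bounds give the uniform asymptotic $F(k,z)=1+O(1/|k|)$ as $|k|\to\infty$ with $\operatorname{Im}k\ge 0$, the implicit constant depending only on $|z|$ and $\int x|V_j|\,dx$, and this confines every zero of $F(\cdot,is)$ to a fixed bounded set as $s$ ranges over $[0,1]$. Extracting $s_n\downarrow t_0$ with $k(s_n)\to k_0$ in the closed upper half-plane and invoking joint continuity of $F$ gives $F(k_0,it_0)=0$. If $\operatorname{Im}k_0>0$ and $t_0>0$, a further application of the implicit function / Rouch\'e argument would continue the branch strictly below $t_0$, contradicting maximality; consequently, either (a) $t_0>0$ and $k_0\in\mathbb{R}$, so $\kappa_0=k_0^2$ is a spectral singularity of $H(it_0)$, or (b) $t_0=0$, in which case the self-adjointness of $H(0)$ forces $k_0$ to be purely imaginary (yielding a real negative eigenvalue) or real (yielding a spectral singularity). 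After an affine reparametrization $s=s(t)$ with $s(0)=t_0$, $s(1)=1$, the map $\kappa(t):=k(s(t))^2$ furnishes the path required by the theorem.

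The main obstacle is the behaviour at collisions of zeros and at the endpoint $t_0$. Away from zeros of $\partial_kF$ the continuation is routine, but guaranteeing that our distinguished branch survives a multiple zero at an intermediate $s\in(t_0,1)$ as a single continuous function requires combining the analyticity of $F$ with Hurwitz's theorem applied to the family $F(\cdot,is_n)$. A secondary but essential technical point is the $z$-uniformity of the bound $F=1+O(1/|k|)$ on the compact set $\{it:t\in[0,1]\}$, without which $k(s)$ could in principle escape to infinity as $s\to t_0^+$; this is supplied by the standard Volterra estimates, whose constants depend continuously on the parameter $z$.
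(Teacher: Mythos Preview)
Your argument is correct and, though it rests on the same Jost-solution machinery as the paper, it is organized quite differently. The paper works directly with the eigenfunctions: given a sequence $z_n\to\zeta$ with $\kappa(z_n)\to\kappa(\zeta)$, it normalizes the eigenfunctions $\phi(z_n,\cdot)$ by their Jost asymptotics, uses the Volterra estimates (your same bounds, in the $\lambda=-ik$ parametrization) to show $\{\phi(z_n,\cdot)\}$ is uniformly bounded and equicontinuous, applies Arzel\`a--Ascoli on compacts, and checks that the limit is a genuine eigenfunction or a spectral-singularity solution of $H(\zeta)$. You instead compress the problem to the zero set of the Jost function $F(k,is)$ and track a single zero using the analytic implicit function theorem, Rouch\'e, and Hurwitz; the eigenfunctions never appear explicitly.

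What each approach buys: the paper's argument is more elementary in its complex-analytic requirements (no implicit function theorem or Puiseux expansions, just ODE continuity and Arzel\`a--Ascoli), and it produces the limiting eigenfunction as a by-product. Your route, on the other hand, is more self-contained on two points the paper leaves implicit. First, you explicitly rule out escape to infinity via the uniform asymptotic $F(k,is)=1+O(1/|k|)$, whereas the paper simply \emph{assumes} the path $\kappa(\gamma(t))$ has a finite limit as $t\to t_1^-$ and shows that limit must lie in $[0,\infty)$. Second, your Rouch\'e/Hurwitz discussion handles the continuation through algebraic branch points (collisions of zeros) in one stroke, while the paper defers this to Kato's analytic perturbation theory. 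Both proofs ultimately hinge on the same Volterra bounds for the Jost solution under the first-moment condition $\int_0^\infty x|V_j(x)|\,dx<\infty$.
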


\section{Potentials with Compact Support}\label{Ch2:Sec:Preliminaries}
We assume initially, that $V_0$ and $V_1$ are bounded, continuous (except for jump discontinuity) and compactly supported real functions, then it follows from \cite{Abramov2001} that the spectrum of the operator $H(z)$ consists of the essential spectrum $\sigma_{ess}(H)=[0,\infty)$ and a finite number of discrete eigenvalues. Further $H(z)$ is an operator valued analytic function (an analytic function of type (A)) and hence from \cite[p. 370]{Kato1980}, the finite system of eigenvalues of $H(z_0)$ are branches of one or several analytic functions that have at most algebraic singularities near $z=z_0$. Also it follows from \cite{Kato1980} that if $\kappa(z)=-\lambda^2(z)$, with $\text{Re}(\lambda(z)) > 0$, is a member of the discrete spectrum of $H(z)$ and is analytic at $z$, then its derivative
\begin{equation}
\kappa'(z)=\frac{\left\langle V_1 \phi(z,\cdot),\bar{\phi}(z,\cdot) \right\rangle}{\left\langle \phi(z,\cdot), \bar{\phi}(z,\cdot) \right\rangle}=\frac{\int_0^\infty {V_1(x)\phi^2(z,x) dx}}{\int_0^\infty {\phi^2(z,x) dx}}
\end{equation}
where $\phi(z,\cdot)$ is the normalized eigenfunction of $H(z)$ corresponding to $\kappa(z)$.

Before proceeding, it is desirable to state few results which are useful in our discussion.

\begin{lemma}\label{lemma1}
\cite[p. 136]{Sanchez1979} Let the differential equation $\mathbf{x}'=f(t,\mathbf{x},\mathbf{\lambda)}$, $t$ a scalar variable, $\mathbf{x}=(x_1,x_2,\ldots,x_n)$ and $\mathbf{\lambda}=(\lambda_1,\lambda_2,\ldots,\lambda_\nu)$ be given, where $f(t,\mathbf{x},\mathbf{\lambda)}$ and $\partial f/\partial x_i$ are defined and continuous in some domain $B$ contained in $\mathbb{R}^{n+\nu+1}$. If $(t_0,\mathbf{x}_0,\mathbf{\lambda}_0)$ belongs to $B$, then there exist positive numbers $r$ and $p$ such that
\begin{enumerate}
\item{Given any $\mathbf{\lambda}$ such that $\|\mathbf{\lambda}-\mathbf{\lambda}_0 \| \le p$, there exists a unique solution $\mathbf{x}=\mathbf{x}(t,\mathbf{\lambda})$ of the given differential equation, defined for $|t-t_0| \le r$ and satisfying $\mathbf{x}(t_0,\mathbf{\lambda})=\mathbf{x}_0$.}
\item{The solution $\mathbf{x}=\mathbf{x}(t,\lambda)$ is a continuous function of $t$ and $\lambda$}
\end{enumerate}
\end{lemma}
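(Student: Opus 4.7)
The plan is to trace $\kappa_1$ \emph{backwards} along the imaginary segment $z=is$, $s\in[0,1]$, and identify the first obstruction to continuation. Starting at $s=1$ with $\kappa(1)=\kappa_1$, Kato's theorem on analytic families of type~(A), recalled in the preceding paragraph, supplies a local analytic branch $\kappa(s)$ of the discrete spectrum of $H(is)$. Such a branch is analytic except at isolated algebraic singularities, so it may be continued as $s$ decreases as long as it stays off the essential spectrum $\sigma_{ess}(H(is))=[0,\infty)$.

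Formally, set
\[
t_0 := \inf\bigl\{\,s\in[0,1]\ :\ \kappa\text{ extends continuously on }(s,1]\text{ with each }\kappa(s')\text{ a discrete eigenvalue of }H(is')\,\bigr\}.
\]
The set on the right contains a neighbourhood of $s=1$ (by Kato) and is open in $[0,1]$ by the same local-continuation argument, so $0\le t_0<1$. Reparametrising $\kappa(t):=\kappa(t_0+t)$ yields the required continuous path on $(0,1]$ with $\kappa(1)=\kappa_1$; it then remains only to identify $\kappa_0:=\lim_{t\downarrow 0}\kappa(t)$.

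To show the limit exists in $\mathbb{C}$, I would use the derivative formula for $\kappa'(z)$ quoted in the excerpt together with Jost-function estimates valid under $\int_0^\infty x|V_j(x)|\,dx<\infty$ (as in \cite{Abramov2001}) to bound $|\kappa'(is)|$ uniformly on any compact subinterval of $(t_0,1]$, preventing $\kappa(t)$ from escaping to infinity or oscillating without a limit. If $\kappa_0$ were a regular discrete eigenvalue of $H(it_0)$, Kato's theorem would supply an analytic branch through $\kappa_0$ extending $\kappa(t)$ strictly below $t_0$ and contradicting the minimality of $t_0$. Hence either $\kappa_0\in\sigma_{ess}(H(it_0))=[0,\infty)$, in which case it is real and, being a non-isolated limit of discrete eigenvalues situated on the essential spectrum, is a spectral singularity of $H(it_0)$; or else $t_0=0$, in which case $\kappa_0$ is real and lies in the discrete spectrum of the self-adjoint operator $H(0)$.

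The step I expect to be hardest is verifying that the limit $\kappa_0$ is well-defined and that $\kappa(t)$ is genuinely continuous across all interior parameter values. One must rule out infinite algebraic winding of the branch as $t\downarrow t_0$; Kato's finiteness of the branching order at each interior algebraic singularity handles continuity on $(0,1)$ via passing to the absolute value of a Puiseux branch, but uniform control near the endpoint requires the Jost-function bounds from the $x|V_j|$-integrability hypothesis. Reality of $\kappa_0$ in the spectral-singularity case is automatic from $\sigma_{ess}\subset\mathbb{R}$, and in the self-adjoint case from the spectral theorem for $H(0)$. Finally, the extension from the compactly supported potentials treated in Section~2 to arbitrary $V_j$ with $\int_0^\infty x|V_j|\,dx<\infty$ is carried out by truncating $V_j$ to compact support, applying the result just proved, and passing to the limit via norm-resolvent convergence of $H(z)$ uniform on compacta in $z$, which preserves discrete eigenvalues and spectral singularities of the limit path.
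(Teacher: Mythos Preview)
Your proposal addresses the wrong statement. The lemma quoted above is a classical ODE result on continuous dependence of solutions on parameters; the paper does not prove it at all but simply cites it from \cite[p.~136]{Sanchez1979} as a tool for the later arguments. What you have written is instead a sketch of a proof of Theorem~\ref{labMainTheorem}, the main result of the paper. Nothing in your text bears on the existence or continuity of solutions $\mathbf{x}(t,\lambda)$ of $\mathbf{x}'=f(t,\mathbf{x},\lambda)$.

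For completeness, since you are really trying to prove Theorem~\ref{labMainTheorem}: the paper's own argument differs from yours in the key analytic step. Rather than bounding $|\kappa'(is)|$ via the derivative formula and then passing to the limit by norm-resolvent convergence of truncated potentials, the paper works directly with the Jost solutions $\phi(z_n,x)$ under the hypothesis $\int_0^\infty x|V_j(x)|\,dx<\infty$. The estimates (\ref{eqnUniformBound}) and (\ref{eqnEquicontinuity}) give uniform boundedness and equicontinuity of $\{\phi(z_n,\cdot)\}$, and Arzel\`a--Ascoli then yields a limit $\phi(\zeta,\cdot)$ satisfying the eigenvalue equation with $\phi(\zeta,0)=0$ and the correct asymptotics. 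This shows $\kappa(\zeta)$ is an eigenvalue if $\mathrm{Re}\,\lambda(\zeta)>0$ and a spectral singularity if $\mathrm{Re}\,\lambda(\zeta)=0$. Your route via uniform bounds on $\kappa'$ is plausible in spirit but would require controlling the denominator $\int_0^\infty \phi^2(z,x)\,dx$ away from zero, which can fail precisely at the algebraic branch points described in Lemma~\ref{Ch2:Lemma:Multiplicity}; the paper's Arzel\`a--Ascoli argument sidesteps this difficulty entirely.
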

\begin{lemma}\label{lemma2}
\cite[p. 169]{Hirsch1974} Let $W \subset E$ be open in $\mathbb{R}^n$ and suppose $f:W \rightarrow E$ has Lipschitz constant $K$. Let $\mathbf{y}(t)$, $\mathbf{z}(t)$ be solutions to
\begin{equation}
\mathbf{x}'=f(\mathbf{x})
\end{equation}
on the closed interval $[t_0,t_1]$. Then for all $t \in [t_0,t_1]:$
$$|\mathbf{y}(t)-\mathbf{z}(t)| \le |\mathbf{y}(t_0)-\mathbf{z}(t_0)|\exp(K(t-t_0))$$
\end{lemma}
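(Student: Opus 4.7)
The plan is to reduce the stated exponential bound to Gronwall's integral inequality, via the standard trick of passing from the ODE to an integral equation and then applying the Lipschitz bound on $f$. First, I would write each solution in integrated form, $\mathbf{y}(t)=\mathbf{y}(t_0)+\int_{t_0}^{t}f(\mathbf{y}(s))\,ds$ and similarly for $\mathbf{z}$. These identities hold on $[t_0,t_1]$ because both $\mathbf{y}$ and $\mathbf{z}$ are $C^1$ solutions of $\mathbf{x}'=f(\mathbf{x})$, and because $f$ is continuous (Lipschitz implies continuous), so the compositions $s\mapsto f(\mathbf{y}(s))$ and $s\mapsto f(\mathbf{z}(s))$ are continuous and hence integrable on $[t_0,t_1]$.

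Subtracting the two integral identities, taking the norm of both sides, and pulling the norm inside the integral by the triangle inequality gives
$$|\mathbf{y}(t)-\mathbf{z}(t)|\le|\mathbf{y}(t_0)-\mathbf{z}(t_0)|+\int_{t_0}^{t}|f(\mathbf{y}(s))-f(\mathbf{z}(s))|\,ds.$$
Applying the Lipschitz hypothesis $|f(\mathbf{y})-f(\mathbf{z})|\le K|\mathbf{y}-\mathbf{z}|$ then transforms this into the linear integral inequality
$$u(t)\le a+K\int_{t_0}^{t}u(s)\,ds,\qquad u(t):=|\mathbf{y}(t)-\mathbf{z}(t)|,\ \ a:=|\mathbf{y}(t_0)-\mathbf{z}(t_0)|.$$

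The rest is a short, standard proof of Gronwall's lemma, which I would carry out inline for self-containedness. Setting $v(t):=a+K\int_{t_0}^{t}u(s)\,ds$, one has $u(t)\le v(t)$ and $v'(t)=Ku(t)\le Kv(t)$ by the fundamental theorem of calculus. Multiplying through by the integrating factor $e^{-Kt}$ gives $(e^{-Kt}v(t))'\le 0$, so $e^{-Kt}v(t)\le e^{-Kt_0}v(t_0)=a\,e^{-Kt_0}$ for all $t\in[t_0,t_1]$. Rearranging yields $v(t)\le a\,e^{K(t-t_0)}$, and combining with $u(t)\le v(t)$ produces the desired bound. The argument is essentially textbook; the only items worth checking are (i) the integrability of $f\circ\mathbf{y}$ and $f\circ\mathbf{z}$ on $[t_0,t_1]$, already handled above, and (ii) differentiability of $v$ at interior points of $[t_0,t_1]$, which follows from continuity of $u$. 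There is no deeper obstacle.
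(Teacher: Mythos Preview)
Your argument is correct and is the standard textbook proof: pass to the integral equation, apply the Lipschitz bound, and finish with Gronwall's inequality via the integrating-factor trick. The paper does not supply its own proof of this lemma at all; it merely quotes the statement from \cite[p.~169]{Hirsch1974}, where essentially the same Gronwall-based argument appears, so there is nothing further to compare.
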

Lemma~\ref{lemma1} talks about the continuity of the solution of differential equation with respect to the coefficient parameters, and Lemma~\ref{lemma2} is about the continuity with respect to the initial conditions. Combining both we will have the following result for a second order linear differential equation.
\begin{lemma}\label{lemma3}
Let the second order linear differential equation $x''+p_n(t) x'+q_n(t)x=r_n(t)$ be given. $p_n(t)$, $q_n(t)$, and $r_n(t)$ be continuous on $[a,b]$ and $p_n \rightarrow p$, $q_n \rightarrow q$, and $r_n \rightarrow r$ uniformly on $[a,b]$. Let $x_n(t)$ be the solution of the differential equation on $[a,b]$ satisfying the initial conditions: $x_n(a)=\alpha_n$, $x'_n(a)=\beta_n$. Also assume that $\alpha_n \rightarrow \alpha$ and $\beta_n \rightarrow \beta$. Then $x_n \rightarrow x$ uniformly on $[a,b]$, where $x(t)$ is the solution of the differential equation satisfying $x(a)=\alpha$, $x'(a)=\beta$.
\end{lemma}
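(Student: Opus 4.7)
The plan is to reduce the given second-order equation to a planar first-order system in $\mathbb{R}^2$ by setting $\mathbf{y}=(x,x')^T$, so that $\mathbf{y}'=f_n(t,\mathbf{y})$ with $f_n(t,y_1,y_2)=(y_2,\,r_n(t)-p_n(t)y_2-q_n(t)y_1)$ and initial condition $\mathbf{y}_n(a)=(\alpha_n,\beta_n)$. The coefficients $p,q,r$ of the limit equation are continuous as uniform limits of continuous functions on the compact interval $[a,b]$, and linearity ensures that the limit system has a unique solution $\mathbf{y}$ on the whole of $[a,b]$ with $\mathbf{y}(a)=(\alpha,\beta)$; Lemma~\ref{lemma1} supplies local existence, and the standard extension argument, using the at-most-linear growth of $f$ in $\mathbf{y}$, pushes the solution across the entire interval.

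Next I would pass to Volterra integral form,
$$\mathbf{y}_n(t)=\mathbf{y}_n(a)+\int_a^t f_n(s,\mathbf{y}_n(s))\,ds,\qquad \mathbf{y}(t)=\mathbf{y}(a)+\int_a^t f(s,\mathbf{y}(s))\,ds,$$
subtract the two, and split the integrand as
$$f_n(s,\mathbf{y}_n(s))-f(s,\mathbf{y}(s))=\bigl[f_n(s,\mathbf{y}_n(s))-f_n(s,\mathbf{y}(s))\bigr]+\bigl[f_n(s,\mathbf{y}(s))-f(s,\mathbf{y}(s))\bigr].$$
Uniform convergence of $p_n$ and $q_n$ bounds their sup-norms uniformly in $n$, so $f_n$ admits a common Lipschitz constant $K$ in $\mathbf{y}$, and the first bracket contributes $K\int_a^t|\mathbf{y}_n-\mathbf{y}|\,ds$. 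Since $\mathbf{y}$ is continuous on $[a,b]$ its components are uniformly bounded, so the second bracket is pointwise dominated by $C\bigl(\|p_n-p\|_\infty+\|q_n-q\|_\infty+\|r_n-r\|_\infty\bigr)\to 0$. Combined with $|\alpha_n-\alpha|+|\beta_n-\beta|\to 0$, this produces an inequality
$$|\mathbf{y}_n(t)-\mathbf{y}(t)|\le \varepsilon_n+K\int_a^t|\mathbf{y}_n(s)-\mathbf{y}(s)|\,ds,\qquad \varepsilon_n\to 0,$$
to which Gronwall's inequality — essentially the content of Lemma~\ref{lemma2} — applies, yielding $|\mathbf{y}_n(t)-\mathbf{y}(t)|\le \varepsilon_n e^{K(b-a)}$ uniformly on $[a,b]$. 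Projecting onto the first coordinate delivers $x_n\to x$ uniformly.

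The main technical wrinkle is that Lemma~\ref{lemma2} is formulated for an autonomous system $\mathbf{x}'=f(\mathbf{x})$ with a single, globally Lipschitz right-hand side, whereas here the right-hand sides are non-autonomous and vary with $n$. I would handle this either by appending $t$ as an auxiliary coordinate to render the system autonomous (and treating the parameter $n$ as a family of fixed vector fields), or, more transparently, by observing that the proof of Lemma~\ref{lemma2} is a straight Gronwall argument that extends verbatim to the inhomogeneous integral inequality above. A secondary but minor point is the passage from the local existence supplied by Lemma~\ref{lemma1} to a solution on all of $[a,b]$; this is immediate from linearity, which rules out finite-time blow-up.
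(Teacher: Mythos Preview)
Your argument is correct. The paper does not actually supply a proof of this lemma: it merely remarks, in the sentence preceding the statement, that the result follows by ``combining'' Lemma~\ref{lemma1} (continuity in the coefficient parameters) with Lemma~\ref{lemma2} (continuity in the initial data), and then moves on. Your direct Gronwall argument---rewriting as a first-order system, passing to integral form, splitting $f_n(s,\mathbf{y}_n)-f(s,\mathbf{y})$ into a Lipschitz piece and a coefficient-error piece, and absorbing both the initial-data error and the coefficient error into a single $\varepsilon_n$---is precisely the standard way to make that one-line claim rigorous, and your remarks about extending Lemma~\ref{lemma2} to the non-autonomous, $n$-dependent setting and about global existence via linearity are exactly the gaps one has to fill.
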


\section{Evolution of the Discrete Spectrum}\label{Ch2:Sec:Evolution}

Consider any path in the complex plane $\mathbb{C}$ traced by $z$ starting from $0$, then each of the discrete spectrum element of $H(z)$ moves in the complex plane until it ceases to exist as a discrete spectrum member. And it follows from \cite{Kato1980} that this movement is analytic except for isolated points of algebraic singularities. If $z$ varies along the real line, then $H(z)$ is a family of self-adjoint operators and the discrete eigenvalues, if exist, move on the negative real axis and all are simple (see Lemma~\ref{Ch2:Lemma:Multiplicity}). Let $\kappa_0$ be an eigenvalue of $H(0)$ and as $z$ varies from $0$ to $\infty$ over the positive real line, the eigenvalue $\kappa(z)$ starts moving continuously (analytically) from $\kappa_0$ and if we further assume $V_1$ is positive on its support then at some $\zeta>0$ the path traced by $\kappa(z)$ terminates (If we choose a large $\zeta>0$ for which $V_0+\zeta V_1 \ge 0$,then the entire discrete spectrum of $H(z)$ disappears). On the other hand as $z$ varies along the negative real axis, $\kappa(z)$ moves further to negative side and remains as analytic function since its derivative $\int_{0}^{\infty}{V_1 \phi^2(z,x)}dx > 0$, $\phi(z,x)$ is the normalized real eigenfunction corresponding to $\kappa(z)$.

As z varies along the imaginary axis starting from $0$, then the discrete eigenvalues start moving along/opposite to the imaginary axis direction as analytic functions are conformal wherever derivative is non-zero. More precisely, if $V_1 \ge 0$, then as $z$ moves in the positive imaginary axis, $\kappa(z)$ also moves with tangent along the positive imaginary axis. For small values of imaginary $z=it$, the eigenvalue can be approximated as $\kappa(z)=\kappa(0)+i t \int_{0}^{a}{V_1 \phi^2(z,x)}dx$, where $[0,a]$ is the support of $V_j, j=0,1$.

In general, as $z$ varies over a continuous path in the complex plane, the eigenvalue $\kappa(z)$ also moves continuously until it ceases to be an eigenvalue. The function $\kappa(z)$ is analytic except at those points where it meets one or more such functions determined by the discrete spectrum of $H(z)$ or in other words the algebraic multiplicity of $\kappa(z)$ exceeds one. The following lemma characterizes this situation and it is  an elementary result.

\begin{lemma}\label{Ch2:Lemma:Multiplicity}
All the discrete eigenvalues of $H(z)$ are of geometric multiplicity one. And an eigenvalue $\kappa(z)=-\lambda^2(z)$ with eigenfunction $\phi(z,x)$ is not simple \emph{if and only if} $$\int_{0}^{\infty}{\phi^2(z,x)dx}=0$$.
\begin{proof}
First it is observed that if $\kappa(z)=-{\lambda^2}(z)$, with $\text{Re}(\lambda(z))>0$, is an eigenvalue of $H(z)$ and the support of $V(z)=V_0+zV_1$ is $[0,a]$, then the corresponding eigenfunction $\phi(z,x)$ satisfies:
\begin{equation}\label{generalEignFun}
\phi(z,x)=b~{\text{e}^{-\lambda(z)x}}, ~~~ \text{for} ~~ x \ge a
\end{equation}
for some non-zero $b$.

Let $\phi_1(z,x)$, $\phi_2(z,x)$ be two eigenfunctions corresponding to $\kappa(z)$, then there exist non-zero $b_1$ and $b_2$ such that, $\phi_1(z,x)=b_1 {\text{e}^{-\lambda(z)x}}$ and $\phi_2(z,x)=b_2 {\text{e}^{-\lambda(z)x}}$ for $x\ge{a}$. Thus $\phi(z,x)=b_2 \phi_1(z,x)-b_1 \phi_2(z,x)$ is the unique solution of the differential equation
\begin{equation*}
{-{{d^2\phi(z,x)}\over{dx^2}}}+V(z)\phi(z,x)=-\lambda^2(z)\phi(z,x)
\end{equation*}
on $[0,a]$ satisfying the condition $\phi(z,a)=0$ and $\phi'(z,a)=0$, prime denotes derivative with respect to $x$. Thus $\phi(z,x)=0$ or the geometric multiplicity of $\kappa(z)$ is one.

Now suppose $\kappa(z)=-\lambda^2(z)$ is not simple. Then $(H(z)+\lambda^2(z))^2\psi(z,x)=0$ and $(H(z)+\lambda^2(z))\psi(z,x)\ne0$ for some $\psi(z,x) \ne 0$ in $Dom(H(z))$. Since the geometric multiplicity of $-\lambda^2(z)$ is one,
\begin{equation}\label{multiEqn1}
(H(z)+\lambda^2(z))\psi(z,x)=c \phi(z,x)
\end{equation}
for some $c \ne 0$, where $\phi(z,x)$ is the normalized eigenfunction of $H(z)$ corresponding to $\kappa(z)=-\lambda^2(z)$. So we also have
\begin{equation}\label{multiEqn2}
(H(z)+\lambda^2(z))\phi(z,x)=0
\end{equation}
From (~\ref{multiEqn1}) and (~\ref{multiEqn2}),
\begin{equation*}
c \int_{0}^{\infty}{\phi^2(z,x)}dx=\int_{0}^{\infty}{{d\over{dx}}(\psi(z,x) \phi'(z,x)- \phi(z,x) \psi'(z,x))}dx=0
\end{equation*}
~\\
Conversely assume that $$\int_0^{\infty}\phi^2(z,x)=0$$
Since $\phi(z,x)=b \text{e}^{-\lambda(z)x}$ is on $[a,\infty)$, the function $\psi(z,x)={1 \over {2 \lambda(z)}}b x\text{e}^{-\lambda(z)x}$ is a solution of $(H(z)+\lambda^2(z))\psi(z,x)=\phi(z,x)$ on $[a,\infty)$. Extend the function $\psi(z,x)$ as a unique solution of $(H(z)+\lambda^2(z))\psi(z,x)=\phi(z,x)$ on $[0,a]$ satisfying the  conditions, which makes $\psi$ and $\psi'$ continuous at $x=a$. Thus we have a function $\psi(z,x)$ on $[0,\infty)$ such that $\psi'(z,x)$, $\psi''(z,x)$ are in $L^2(0,\infty)$ and it satisfies Equation~\ref{multiEqn1}. Repeating the same process as before we arrive at
\begin{equation*}
\int_{0}^{\infty}{{d\over{dx}}(\psi(z,x) \phi'(z,x)- \phi(z,x) \psi'(z,x))}dx=\int_{0}^{\infty}{\phi^2(z,x)}dx=0
\end{equation*}
\begin{equation*}
\implies \psi(z,a) \phi'(z,a)=0 \implies \psi(z,a)=0
\end{equation*}
This proves the existence of a function $\psi$ in the domain of $H(z)$, such that $(H(z)+\lambda^2(z))\psi(z,x) \ne 0$ and $(H(z)+\lambda^2(z))^2\psi(z,x) = 0$.
\end{proof}
\end{lemma}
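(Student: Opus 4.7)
The plan is to exploit the compact support of the potentials, together with classical ODE uniqueness and a Green's-identity style integration by parts.

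First, I would establish the tail structure: for $x \ge a$, where $V(z)=V_0+zV_1$ vanishes, any $L^2$ eigenfunction for $\kappa(z)=-\lambda^2(z)$ with $\operatorname{Re}\lambda(z)>0$ must be a multiple of $e^{-\lambda(z)x}$, so every eigenfunction takes the form $\phi(z,x)=b\,e^{-\lambda(z)x}$ on $[a,\infty)$ for some $b\ne 0$. Geometric multiplicity one then follows by a Picard-type uniqueness argument: given two eigenfunctions with tail coefficients $b_1,b_2$, the combination $b_2\phi_1-b_1\phi_2$ vanishes identically on $[a,\infty)$, hence has zero Cauchy data at $x=a$, hence vanishes on $[0,a]$ by uniqueness for the linear homogeneous ODE. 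Thus any two eigenfunctions for $\kappa(z)$ are proportional.

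For the forward direction of the equivalence I would assume $\kappa(z)$ is not simple. Since the geometric multiplicity is one, any Jordan-type vector $\psi\in\operatorname{Dom}(H(z))$ with $(H(z)+\lambda^2(z))^2\psi=0$ but $(H(z)+\lambda^2(z))\psi\ne 0$ must satisfy $(H(z)+\lambda^2(z))\psi = c\phi$ for some $c\ne 0$. Multiplying this identity by $\phi$, the identity $(H(z)+\lambda^2(z))\phi=0$ by $\psi$, and subtracting turns the left-hand side into $c\,\phi^2$ and the right-hand side into $\tfrac{d}{dx}(\psi\phi'-\phi\psi')$. Integrating over $[0,\infty)$ kills the boundary terms (Dirichlet condition at $0$, exponential decay at $\infty$), yielding $c\int_0^\infty \phi^2\,dx=0$ and hence the vanishing of the integral.

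For the converse I would construct a generalized eigenvector explicitly under the hypothesis $\int_0^\infty\phi^2\,dx=0$. On $[a,\infty)$ the Frobenius ansatz $\psi(z,x)=\frac{bx}{2\lambda(z)}e^{-\lambda(z)x}$ satisfies $(H(z)+\lambda^2(z))\psi=\phi$; extending $\psi$ backwards to $[0,a]$ as the unique solution of the inhomogeneous ODE with Cauchy data at $x=a$ matching the tail expression produces a function on $[0,\infty)$ with $\psi',\psi''\in L^2$. The remaining and most delicate step is to verify the Dirichlet condition $\psi(z,0)=0$ so that $\psi$ lies in $\operatorname{Dom}(H(z))$. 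Reversing the Green's-identity calculation of the previous paragraph gives $\int_0^\infty \phi^2\,dx = -\psi(z,0)\phi'(z,0)$, and since $\phi'(z,0)\ne 0$ (otherwise $\phi$ would vanish identically by uniqueness for the Dirichlet problem), the hypothesis forces $\psi(z,0)=0$, exhibiting the required Jordan vector.

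The genuinely non-routine point is the converse: the explicit tail ansatz, once extended leftwards by ODE integration, has no a priori reason to respect the Dirichlet condition at the origin, and it is precisely the vanishing of $\int_0^\infty\phi^2$ that enforces it through the boundary-term computation. Everything else in the argument reduces to standard uniqueness for linear ODEs and integration by parts once the exponential tail structure of the eigenfunction has been identified.
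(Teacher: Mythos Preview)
Your proposal is correct and follows essentially the same route as the paper: the exponential tail form on $[a,\infty)$, Picard uniqueness for geometric multiplicity one, the Wronskian/Green's-identity computation for the forward direction, and the explicit ansatz $\psi=\frac{bx}{2\lambda}e^{-\lambda x}$ extended leftwards for the converse. Your converse is in fact slightly cleaner than the paper's: you correctly place the surviving boundary term at $x=0$ and justify $\phi'(z,0)\neq 0$, whereas the paper writes $\psi(z,a)\phi'(z,a)=0$ (evidently a slip for $x=0$) and leaves the nonvanishing of $\phi'$ at the origin implicit.
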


\setcounter{theorem}{0}
Next it is shown that if the curve traced by the discrete eigenvalue $\kappa(z)$ of $H(z)$ as $z$ traces a curve in the complex plane terminates, then it terminates at the essential spectrum $[0, \infty)$ of $H(z)$.

\begin{theorem}\label{theorem1}
Let $z=\gamma(t)$ be a path in $\mathbb{C}$ and let $\kappa_0$ be an eigenvalue of $H(\gamma(0))$. As $z$ moves along the path $\gamma(t)$ starting from $\gamma(0)$, the eigenvalue traces a continuous path in $\mathbb{C}$, say $\kappa(\gamma(t))$ starting from $\kappa_0$. Assume that this path terminates at $t_1$ and let $\gamma(t_1)=\zeta$. Then $\kappa(\zeta)=\lim\limits_{t \rightarrow t_1-}{\kappa(\gamma(t))} \ge 0 $
\end{theorem}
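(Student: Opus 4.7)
The strategy is a proof by contradiction: suppose a subsequential limit $\kappa_\ast$ of $\kappa(\gamma(t))$ as $t\to t_1^-$ lies outside $[0,\infty)$, and construct a genuine $L^2$--eigenfunction of $H(\zeta)$ with eigenvalue $\kappa_\ast$. The type-(A) analytic structure of $H(\cdot)$ recalled from \cite{Kato1980} then supplies an eigenvalue branch through $\kappa_\ast$ at $z=\zeta$, forcing $\kappa(\gamma(t))$ to extend continuously past $t_1$ and contradicting the termination hypothesis.

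Writing $\kappa(\gamma(t))=-\lambda^2(\gamma(t))$ with $\text{Re}(\lambda(\gamma(t)))>0$, the assumption $\kappa_\ast\notin[0,\infty)$ translates into $\lambda(\gamma(t_n))\to\mu$ with $\text{Re}(\mu)>0$ along some subsequence $t_n\to t_1^-$. The key choice is the normalization: using the identification from the proof of Lemma~\ref{Ch2:Lemma:Multiplicity}, I fix each eigenfunction $\phi(\gamma(t_n),\cdot)$ by demanding $\phi(\gamma(t_n),x)=e^{-\lambda(\gamma(t_n))x}$ for $x\geq a$, where $[0,a]$ contains the supports of $V_0$ and $V_1$. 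Then the Cauchy data $\phi(\gamma(t_n),a)\to e^{-\mu a}$ and $\phi'(\gamma(t_n),a)\to-\mu\,e^{-\mu a}$ converge.

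On the compact interval $[0,a]$ the eigenfunctions solve a second-order linear ODE whose coefficients $V_0+\gamma(t_n)V_1+\lambda^2(\gamma(t_n))$ converge uniformly to $V_0+\zeta V_1+\mu^2$, and whose data at $x=a$ converge by the previous step. Lemma~\ref{lemma3}, applied to the equivalent real first-order system of dimension four, then gives uniform convergence $\phi(\gamma(t_n),\cdot)\to\phi_\ast(\cdot)$ on $[0,a]$, where $\phi_\ast$ solves $-\phi_\ast''+(V_0+\zeta V_1)\phi_\ast=-\mu^2\phi_\ast$. Passing to the limit in $\phi(\gamma(t_n),0)=0$ yields $\phi_\ast(0)=0$, and extending by $\phi_\ast(x):=e^{-\mu x}$ on $[a,\infty)$ produces a non-trivial element of $\text{Dom}(H(\zeta))$ satisfying $H(\zeta)\phi_\ast=\kappa_\ast\phi_\ast$. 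This furnishes the promised eigenvalue of $H(\zeta)$ and completes the contradiction.

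The principal obstacle is exactly this choice of normalization. The $L^2$-normalization $\|\phi\|=1$ degenerates in the regime $\text{Re}(\lambda)\to 0$, since then the tail $e^{-\lambda x}$ fails to be square-integrable and $\phi$ cannot be held bounded on $[0,a]$; without fixing the tail coefficient $b\equiv 1$ the limit $\phi_\ast$ could trivially vanish on $[0,a]$ and the argument would collapse. Two subsidiary points need care: one must confirm that Lemma~\ref{lemma3} transfers to the complex setting (split into real and imaginary parts to produce the real four-dimensional first-order system), and extract a subsequence $t_n\to t_1^-$ avoiding the isolated algebraic singularities of $\kappa(\cdot)$ so that $\lambda(\gamma(t_n))$ admits an honest limit.
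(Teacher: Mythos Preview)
Your proposal is correct and follows essentially the same route as the paper: assume $\text{Re}(\lambda(\zeta))>0$, normalize the eigenfunctions by fixing the tail coefficient $b\equiv 1$ so that $\phi(z_n,x)=e^{-\lambda(z_n)x}$ for $x\ge a$, use Lemma~\ref{lemma3} on $[0,a]$ with the convergent Cauchy data at $x=a$, and pass to the limit in the boundary condition $\phi(z_n,0)=0$ to exhibit an eigenfunction of $H(\zeta)$. Your additional remarks---the real four-dimensional system to handle complex coefficients, the explicit appeal to the type-(A) branch at $\zeta$ to close the contradiction, and the cautionary note about $L^2$-normalization degenerating---are welcome elaborations of steps the paper leaves implicit, but the underlying argument is the same.
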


\begin{proof}
Let us take $\kappa(z)=-\lambda^2(z)$. Since $z \rightarrow \zeta$, through a path in $\mathbb{C}$, we can find a sequence ${z_n}$ in the path with 
$z_n \rightarrow \zeta$ and hence $-\lambda^2(z_n) \rightarrow -\lambda^2(\zeta)$. Since $-\lambda^2(z_n)$ is an eigenvalue of $H(z_n)$, 
we have $\text{Re}(\lambda_n(z))>0 ~ \Rightarrow \text{Re}(\lambda(\zeta)) \ge 0$. Assume that $\text{Re}(\lambda(\zeta))>0$, we will derive a contradiction.\\
For $x \ge a$, $H(z_n)+\lambda^2(z_n)=-{\frac{d^2}{dx^2}} + \lambda^2(z_n)$ and hence the corresponding eigenfunction is 
$\phi(z_n,x)=b(z_n)\text{e}^{-\lambda(z_n)x}$. Without loss of generality we choose $b(z_n)=1$.
Thus for $x \ge a$, $\phi(z_n,x)=\text{e}^{-\lambda(z_n)x} \rightarrow \text{e}^{-\lambda(\zeta)x}$ uniformly.

On $[0,a]$, $\phi(z_n,x)$ is the solution of the differential equation 
$$-{\frac{d^2\phi(z_n,x)} {dx^2~~~~}} + (V_0+z_n V_1+\lambda^2(z_n)) \phi(z_n,x)=0$$ 
satisfying the conditions $$\phi(z_n,a)=\text{e}^{-\lambda(z_n)a} ~~\text{and}~~ \phi'(z_n,a)= -\lambda(z_n) \text{e}^{-\lambda(z_n)a}$$
Therefore using Lemma~\ref{lemma3}, $\phi(z_n,x) \rightarrow \phi(\zeta,x)$ uniformly on $[0,a]$, where $\phi(\zeta,x)$ 
satisfies the differential equation 
\begin{equation}\label{diffEqn1}
{-d^2\phi(\zeta,x) \over dx^2} + (V_0 + \zeta V_1) \phi(\zeta,x)=-\lambda^2(\zeta)\phi(\zeta,x)
\end{equation}
 on $[0,a]$ and $\phi(\zeta,a)=\text{e}^{-\lambda(\zeta)a}$, $\phi'(\zeta,a)= -\lambda(\zeta) \text{e}^{-\lambda(\zeta)a}$. 
Since $\phi(z_n,0)=0$ for all $n$, $\phi(\zeta,0)=0$. Thus we have proved that $\phi(z_n,x) \rightarrow \phi(\zeta,x)$ in $L^2(0,\infty)$, 
$\phi(\zeta,0)=0$ and satisfy the differential Equation~\ref{diffEqn1}. And hence $-\lambda^2(\zeta)$ is an eigenvalue of $H(\zeta)$, 
a contradiction.
\end{proof}

If $\kappa_1$ is a discrete spectrum member of $H(i)=-{\frac{d^2}{dx^2}}+V_0 + i V_1$. As $z:=it$ moves from $i$ to $0$ (that is, $t$ from $1$ to $0$) along the imaginary axis, $\kappa_1$ evolves continuously (analytically except for those points mentioned in Lemma~\ref{Ch2:Lemma:Multiplicity}) to trace a path $\kappa(t)$ 
in $\mathbb{C}$ and the above result ensures that either of the two possibilities occur:
\begin{enumerate}
\item{$\kappa(t)$ reaches the negative real line at $\kappa(0)=\kappa_0$, a discrete eigenvalue of the self-adjoint operator 
$H(0)=-{\frac{d^2}{dx^2}}+V_0$.}
\item{$\kappa(t)$ reaches $[0, \infty)$ at $\kappa_0$, a spectral singularity of $H(it_0)$, $0 \leq t_0 < 1$.}
\end{enumerate}
This proves the statement of Theorem~\ref{labMainTheorem} for compact potentials. But for compactly supported potentials a more general result is possible.

We have the following definitions (\cite{Abramov2001}):
\begin{defn}
Let $\kappa=-\lambda^2$ be a complex number and there exists a function $\phi(x)$ which satisfies the following conditions:

$$-\phi''(x)+V(x)\phi(x)=-\lambda^2 \phi(x), ~~ \text{on}~ [0,\infty)$$
$$\phi(0)=0$$
$$\phi(x)=e^{-\lambda x}+o(|e^{\lambda x}|), ~~ \text{as} ~ x\rightarrow \infty$$

Then if $\text{Re}(\lambda)<0$, $\kappa$ is a \textbf{resonance} of the operator $H=-{\frac{d^2}{dx^2}}+V$ defined on the domain 
$\{f \in L^2(0,\infty): f', f'', Vf \in L^2(0,\infty) ~ \text{and} ~ f(0)=0 \}$. If $\text{Re}(\lambda)=0$, then $\kappa$ is a 
\textbf{spectral singularity}.
\end{defn}
For compactly supported potentials, a perturbation of the potential gives rise to a same order of variation in the resonances 
of the Schr\"odinger operator (\cite{Agmon1998}). That is to say that the resonances of $H(z)$ move continuously with respect to $z$, provided $H(z)$ is not the free Schr\"odinger operator for any $z$ (see Section~\ref{Ch2:Sec:Example}). 
Thus we have the following:

\begin{corollory}
Let $V_0 \neq 0$, then for any discrete eigenvalue $\kappa_1$ of $H(i)$, there exists a discrete eigenvalue, or a spectral singularity or a resonance $\kappa_0$ of 
the self-adjoint operator $H(0)$ such that $\kappa_1$ is continuously evolved from $\kappa_0$ as $z$ varies along the imaginary line 
from $0$ to $i$. That is, there exists a continuous path $\kappa(t)$ such that $\kappa(0)=\kappa_0$, $\kappa(1)=\kappa_1$ and each 
$\kappa(t)$ is a discrete eigenvalue, or a spectral singularity or a resonance of the operator $H(it)$.
\end{corollory}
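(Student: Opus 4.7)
The plan is to bootstrap from Theorem~\ref{labMainTheorem} and, using the quoted Agmon perturbation result, to continue the curve of the evolved spectral point past the intermediate spectral singularity at $it_0$ all the way down to $z=0$, allowing the continuation to be a resonance rather than a discrete eigenvalue.

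First I would apply Theorem~\ref{labMainTheorem} to the given $\kappa_1$ to obtain a continuous path $\kappa(t)=-\lambda^2(t)$, $t\in[t_0,1]$, of discrete eigenvalues of $H(it)$ whose left endpoint $\kappa(t_0)$ is either a discrete eigenvalue of $H(it_0)$ — in which case an elementary analytic-perturbation step extends the path a bit further to the left — or a spectral singularity of $H(it_0)$, i.e.\ $\text{Re}(\lambda(t_0))=0$. In the latter case I would invoke the cited continuity of resonances under small perturbations of the potential: since $V_0\ne 0$, $H(it)$ stays clear of the free operator for every $t\in[0,1]$, and the point $-\lambda^2(t_0)$ is the continuous limit of a family of resonances $-\lambda^2(t)$ of $H(it)$ with $\text{Re}(\lambda(t))<0$ for $t$ slightly less than $t_0$. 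This glues the $L^2$ eigenfunction branch on the right of $t_0$ to the Jost-type resonance branch on the left of $t_0$, so $\kappa(t)$ is well defined and continuous across $t_0$.

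I would then iterate. Having defined $\kappa(t)$ on some interval $(t_0',t_0]$ by the resonance continuation, I check whether the path hits $[0,\infty)$ again at some $t_0'>0$. If it does, $\kappa(t_0')$ is once more a spectral singularity of $H(it_0')$ and the same surgery can be performed, possibly sending the branch back into the discrete eigenvalue regime (if $\text{Re}(\lambda)$ switches sign again) or further into the resonance regime. Because the eigenvalues and resonances of $H(z)$ are branches of analytic functions with only algebraic singularities, such transition points are isolated in the compact interval $[0,1]$, so only finitely many such surgeries are needed and $\kappa(0)$ exists. The sign of $\text{Re}(\lambda(0))$ then places $\kappa_0=\kappa(0)$ in one of the three listed categories — discrete eigenvalue, spectral singularity, or resonance of $H(0)$.

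The main obstacle is justifying the gluing at a spectral singularity: one has to check that the same analytic branch $-\lambda^2(\cdot)$ is continued as one passes from the $L^2$-eigenfunction side to the Jost-solution side, so that the curve $\kappa(t)$ is unambiguously continuous at $t_0$. This is exactly the content of the Agmon stability statement, and it is why the hypothesis $V_0\ne 0$ is indispensable: if the pencil $H(it)$ were to coincide with the free operator somewhere on $[0,1]$, there would be no resonance branch for the continuation to land on. A secondary point is ruling out pathological accumulation of spectral singularities on $[0,1]$, which is handled by the analytic dependence of $\kappa$ on $z$ inherited from the Kato type (A) framework used earlier.
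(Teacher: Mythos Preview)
Your proposal is correct and follows the same route as the paper: invoke the Agmon stability result on resonances of compactly supported potentials to continue the eigenvalue branch produced by Theorem~\ref{labMainTheorem} past any intermediate spectral singularity, using $V_0\neq 0$ to guarantee that $H(it)=-\tfrac{d^2}{dx^2}+V_0+itV_1$ is never the free operator on $[0,1]$. The paper's own justification is in fact terser than yours---it simply records that resonances move continuously with the potential and states the corollary---so your explicit description of the gluing at $t_0$, the iteration, and the finiteness of the transition points via analyticity is a welcome fleshing-out rather than a departure.
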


In fact, if $U\neq 0$, $V\neq 0$ are two compactly supported, complex continuous (except for jump discontinuity) functions on $[0,\infty)$, then any discrete eigenvalue of $H_V=-{\frac{d^2}{dx^2}}+V$ is evolved from a discrete eigenvalue, spectral singularity, or a resonance of $H_U=-{\frac{d^2}{dx^2}}+U$. This follows immediately from a similar analysis on the operator valued analytic function  $H(z)=-{\frac{d^2}{dx^2}}+U+z (V-U)$. If $V$ is a multiple of $U$, then it can happen that $U+z(V-U)=0$ for some $0\leq z \leq 1$. In this situation consider a different path so that the potential changes from $U$ to $V$ without taking $0$ on that path (see Section~\ref{Ch2:Sec:Example}).


\begin{corollory}
Let $M=\max{\{|V_1(x)|: x \in [0,a]\}}$, $\kappa_0$ be a discrete eigenvalue of the self-adjoint operator $H(0)=-{\frac{d^2}{dx^2}}+V_0$. Let $\gamma$ be a path in $\mathbb{R}$, say $\gamma(t)=t$, and $\kappa(t)$ be the path traced by discrete eigenvalue or spectral singularity or resonance of $H(t)$ with $\kappa(0)=\kappa_0$ as $t$ varies over the real line starting from $0$. Then for $|t|<|\kappa_0|/M$,  $\kappa(t)$ remains to be a discrete eigenvalue of $H(t)$. In particular, if $\kappa_0$ is the discrete eigenvalue of $H(0)$ nearest to $0$ then $$|\sigma_\text{d}(H(t))| \ge |\sigma_\text{d}(H(0))|$$ for real $t$ with $t < |\kappa_0|/M$.
\end{corollory}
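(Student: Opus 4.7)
The plan is to exploit self-adjointness of $H(t)$ for real $t$ and the explicit derivative formula from Section~\ref{Ch2:Sec:Preliminaries}. Because $H(t)$ is self-adjoint when $t\in\mathbb{R}$, each $\kappa(t)$ stays on the real axis, the corresponding eigenfunction may be chosen real, and (by Lemma~\ref{Ch2:Lemma:Multiplicity}) each such eigenvalue stays geometrically and algebraically simple so long as it remains a discrete eigenvalue. Hence $\kappa(t)$ is genuinely analytic throughout the relevant interval, and the problem reduces to integrating a bound on $|\kappa'(t)|$.

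Concretely, I would first note that for real $t$ the eigenfunction $\phi(t,\cdot)$ is real-valued, so
\[
\int_0^\infty \phi^2(t,x)\,dx \;=\; \|\phi(t,\cdot)\|_{L^2}^2 \;>\; 0,
\]
and then combine this with the derivative formula and the fact that $V_1$ is supported in $[0,a]$ with $|V_1|\le M$:
\[
|\kappa'(t)| \;=\; \left|\frac{\int_0^a V_1(x)\phi^2(t,x)\,dx}{\int_0^\infty \phi^2(t,x)\,dx}\right| \;\le\; M.
\]
Integrating from $0$ to $t$ along the real axis then gives the pointwise bound $|\kappa(t)-\kappa_0|\le M|t|$.

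The conclusion follows from Theorem~\ref{theorem1}: if the discrete-eigenvalue path were to terminate at some $t^*$ with $|t^*|<|\kappa_0|/M$, then $\kappa(t^*)\in[0,\infty)$; but the derivative bound forces $\kappa(t)\le \kappa_0+M|t|<\kappa_0+|\kappa_0|=0$, a contradiction. Hence $\kappa(t)$ remains a (necessarily negative) discrete eigenvalue of $H(t)$ for all $|t|<|\kappa_0|/M$. For the ``in particular'' statement, apply the same argument to every discrete eigenvalue $\kappa_0'$ of $H(0)$: each such $\kappa_0'$ satisfies $|\kappa_0'|\ge|\kappa_0|$, so each one continues as a genuine discrete eigenvalue of $H(t)$ for $|t|<|\kappa_0|/M$, yielding $|\sigma_\text{d}(H(t))|\ge|\sigma_\text{d}(H(0))|$.

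The only real subtlety is ensuring the derivative formula is valid at \emph{every} point of the interval rather than just at isolated values: an algebraic singularity of $\kappa(t)$ would invalidate the integration step. This is exactly where Lemma~\ref{Ch2:Lemma:Multiplicity} is indispensable — along the real axis $\phi$ is real, so $\int\phi^2>0$ and the lemma guarantees that $\kappa(t)$ stays simple and therefore analytic, legitimising the mean-value estimate above.
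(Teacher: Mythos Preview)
Your proposal is correct and follows essentially the same route as the paper: bound $|\kappa'(t)|$ by $M$ via the derivative formula with a real normalized eigenfunction, integrate to get $|\kappa(t)-\kappa_0|\le M|t|$, and conclude via Theorem~\ref{theorem1} that the eigenvalue cannot reach $[0,\infty)$ while $|t|<|\kappa_0|/M$. Your version is slightly more explicit than the paper's in justifying analyticity of $\kappa(t)$ through Lemma~\ref{Ch2:Lemma:Multiplicity}, but the argument is the same.
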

\begin{proof}
As $t$ varies from $0$ through the real line, the discrete eigenvalue $\kappa(t)$ starts from the negative real number $\kappa_0$ and moves analytically as a discrete eigenvalue of $H(t)$ until it reaches $0$. Assume that at $t=t_0$ it reaches $0$. Then
$$0=\kappa_0+\lim\limits_{t \rightarrow t_0}\int_0^{t}{\int_0^{a}{V_1(x) \phi^2(s,x)} dx}~ds$$ 
$$\Rightarrow |\kappa_0| \le M |t_0| = M |t_0|{~~~~~~~~~~~~~~~~~~~~~~~~~~~~~}$$ 
here $\phi(s,\cdot)$ represents the normalized eigenfunction of $H(s)$ corresponding to $\kappa(s)$.

Hence $\kappa(t)$ remains to be an eigenvalue of $H(t)$, if $t \in \mathbb{R}$ is such that $|t|< |\kappa_0|/{M}$.

It immediately follows that if $\kappa_0$ is the discrete eigenvalue of $H(0)$ nearest to $0$, then for real $t$ with 
$|t|<|\kappa_0| / {M}$,$$ |\sigma_{\text{d}}(H(t))| \ge |\sigma_{\text{d}}(H(0))|$$
\end{proof}

It is been observed in the beginning of this section that as $z$ moves along the imaginary line starting from $0$, each of the discrete eigenvalues of $H(z)$ starts moving in or opposite to the direction of imaginary axis. So one would expect, in general, a better estimate than the previous result.
\begin{conj}
If $V_1>0$ (or $V_1<0$) on $[0,a]$, the support of $V_0$ and $V_1$, then each of the discrete eigenvalue of the self-adjoint operator $H(0)$ continuously evolves to a 
discrete eigenvalue of $H(it)$ for any $t \in \mathbb{R}$. In particular 
$$\left|\sigma_{\text{d}}\left( -{\frac{d^2}{dx^2}} + V_0 + i V_1\right)\right| \ge \left|\sigma_{\text{d}}\left( -{\frac{d^2}{dx^2}} + V_0 \right)\right|$$ 
where the discrete eigenvalues are counted according to multiplicity.
\end{conj}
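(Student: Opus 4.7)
The strategy is to show the path $\kappa(it)$ emanating from each $\kappa_0\in\sigma_{\mathrm{d}}(H(0))$ stays in $\mathbb{C}\setminus[0,\infty)$ for every $t\in\mathbb{R}$; by Theorem~\ref{theorem1} this is enough, since any discontinuation of the path forces an approach to the essential spectrum $[0,\infty)$. I will treat the case $t\ge 0$ with $V_1>0$; the sign-reversed case is identical.

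The first step is to extract the sign of $\mathrm{Im}\,\kappa(it)$. Pairing the equation $-\phi''+(V_0+itV_1)\phi=\kappa\phi$ with $\overline{\phi}$ and integrating (the boundary terms vanish since $\phi(0)=0$ and $\phi$ decays exponentially at infinity) yields
\begin{equation*}
\int_0^{\infty}|\phi'|^2\,dx+\int_0^{a}V_0|\phi|^2\,dx+it\int_0^{a}V_1|\phi|^2\,dx=\kappa\int_0^{\infty}|\phi|^2\,dx.
\end{equation*}
Taking imaginary parts gives $\mathrm{Im}\,\kappa(it)=t\bigl(\int_0^{a}V_1|\phi|^2\,dx\bigr)/\bigl(\int_0^{\infty}|\phi|^2\,dx\bigr)$. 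Since $\phi$ cannot vanish identically on $[0,a]$ --- otherwise matching with $\phi(it,x)=e^{-\lambda(it)x}$ at $x=a$ would force $\phi\equiv 0$ --- and since $V_1>0$ on $[0,a]$, we obtain $\mathrm{Im}\,\kappa(it)>0$ strictly for every $t>0$ at which the path exists.

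Next I would convert a hypothetical termination into a statement about a Jost-type solution. Suppose the path ends at some $t_0>0$, so $\kappa(it)\to\kappa_0\ge 0$ as $t\uparrow t_0$. Writing $\kappa=-\lambda^2$ with $\lambda=u-iv$, $u,v>0$, the limits $\mathrm{Im}\,\kappa=2uv\to 0$ and $\mathrm{Re}\,\kappa=v^2-u^2\to\kappa_0\ge 0$ together force $u=\mathrm{Re}\,\lambda\to 0$. Normalising $\phi(it,x)=e^{-\lambda(it)x}$ for $x\ge a$, Lemma~\ref{lemma3} says $\phi(it,\cdot)$ converges uniformly on $[0,a]$ to a nonzero solution $\phi_0$ of $-\phi_0''+(V_0+it_0V_1)\phi_0=\kappa_0\phi_0$ satisfying $\phi_0(0)=0$ and extending to $e^{-iv_0 x}$ on $[a,\infty)$. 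Thus $\kappa_0$ is necessarily a spectral singularity of $H(it_0)$ (a zero-energy resonance when $\kappa_0=0$), and the whole conjecture reduces to excluding such spectral singularities along the path $z=it$.

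The main obstacle, and the heart of the conjecture, is therefore to rule out spectral singularities of $H(it)$ on $[0,\infty)$ for every $t>0$ under the hypotheses that $V_0$ is real and $V_1>0$ on $[0,a]$. I would attack this through the Jost function $F(z,k)$, which is jointly analytic in $(z,k)$, whose zeros in $\{\mathrm{Im}\,k>0\}$ correspond to discrete eigenvalues of $H(z)$ and whose real zeros correspond to spectral singularities. Since $H(0)$ is self-adjoint, $F(0,\cdot)$ has no real zeros; the goal is to upgrade this, through an explicit integral representation of $F$ together with the strict positivity of $V_1$, to the non-vanishing of $F(it,k)$ for every real $k$ and every $t>0$. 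Once this real-zero-free property is secured, $\mathrm{Re}\,\lambda(it)$ stays bounded below by a positive constant on every compact $t$-interval, ruling out the scenario isolated in the previous paragraph and completing the proof; the cardinality estimate $|\sigma_{\mathrm{d}}(H(it))|\ge|\sigma_{\mathrm{d}}(H(0))|$ follows because distinct branches $\kappa(\cdot)$ remain distinct when counted with algebraic multiplicity, by local uniqueness of the analytic branches from Kato. I expect this real-zero-free analysis of $F(it,\cdot)$ to be the genuine technical difficulty, and the reason the statement is recorded here only as a conjecture.
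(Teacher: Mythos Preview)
The paper offers no proof of this statement: it is explicitly recorded as a \emph{Conjecture}, and the only supporting evidence given is the heuristic remark preceding it (that for small imaginary $z$ the eigenvalues move off the real axis in the imaginary direction). There is therefore nothing in the paper to compare your argument against.

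Your proposal is itself not a proof but a reduction. The imaginary-part identity
\[
\mathrm{Im}\,\kappa(it)\;=\;t\,\frac{\int_0^a V_1|\phi|^2\,dx}{\int_0^\infty |\phi|^2\,dx}
\]
is correct, and your observation that $\phi$ cannot vanish identically on $[0,a]$ (uniqueness for the ODE plus the nonzero Cauchy data at $x=a$) legitimately gives $\mathrm{Im}\,\kappa(it)>0$ for every $t>0$. You then correctly invoke Theorem~\ref{theorem1} to say that the only way the path can terminate is by approaching $[0,\infty)$, and you analyse the limiting solution to identify that endpoint as a spectral singularity of $H(it_0)$. All of this is sound, and it is exactly the mechanism the paper's framework would predict.

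The gap, which you name explicitly, is that nothing in the argument excludes such spectral singularities. The positivity of $\mathrm{Im}\,\kappa(it)$ does not prevent it from tending to zero, because as $\mathrm{Re}\,\lambda(it)\to 0$ the tail $\int_a^\infty|\phi|^2=e^{-2a\,\mathrm{Re}\,\lambda}/(2\,\mathrm{Re}\,\lambda)$ blows up while the numerator stays bounded; so the strict inequality gives no quantitative lower bound. Your suggestion to attack this through non-vanishing of the Jost function $F(it,k)$ for real $k$ is reasonable, but you do not carry it out, and nothing in the paper does either. In short, your proposal is a clean and correct reformulation of the conjecture as a statement about absence of spectral singularities along the ray $z=it$, but it leaves that statement open---which is precisely why the paper labels it a conjecture.
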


\section{A More General Case}\label{Ch2:Sec:GeneralResult}
In this section we assume that $V_0$, $V_1$ are bounded real measurable functions satisfying 
\begin{equation}\label{eqnPolDecPotential}
\int_0^{\infty}{x |V_j(x)| dx} < \infty,~~~~j=0,1
\end{equation}
Here the operator $H(z)=-{\frac{d^2}{dx^2}}+V_0+z V_1$ is a compact perturbation of the free Laplacian and hence its spectrum consists 
of essential spectrum $\sigma_{ess}=[0,\infty)$ and a countable number of discrete eigenvalues which can only accumulate to a point in the 
essential spectrum $[0,\infty)$. The spectral analysis of Schr\"odinger operators on the half-line with potential satisfying condition (\ref{eqnPolDecPotential}) has been carried out by several authors (\cite{Pavlov1967} and references therein). Our approach is similar to the one in the previous section.

We start with presenting information required for our analysis. It is known that if a complex potential $V(x)$ has the property stated in (\ref{eqnPolDecPotential}), the equation 
\begin{equation}
-y''+V(x) y= -\lambda^2 y, ~~~~ \text{Re}(\lambda) \ge 0
\end{equation}
has a unique solution $\phi(\lambda,x)$ satisfying the condition $\phi(\lambda,x)~\text{e}^{\lambda x} \rightarrow 1$ as $x \rightarrow \infty$. 
The function $\phi(\lambda,x)$ also satisfies (\cite{Agranovich1959,Pavlov1967}) the following estimates
\begin{equation}\label{eqnUniformBound}
|\phi(\lambda,x)-\text{e}^{-\lambda x}| \le K~ \text{e}^{-\text{Re}(\lambda) x} \int_{x}^{\infty}{t |V(t)| dt}, ~~~ \text{Re}(\lambda) \ge 0,
\end{equation}
\begin{equation}\label{eqnEquicontinuity}
|\phi_x(\lambda,x)+\lambda~\text{e}^{-\lambda x}| \le K~ \text{e}^{-\text{Re}(\lambda) x} \int_{x}^{\infty}{|V(t)| dt}, ~~~ \text{Re}(\lambda) \ge 0.
\end{equation}
Now we prove Theorem~\ref{labMainTheorem}.
\begin{proof}
It is sufficient to prove the statement of Theorem~\ref{theorem1} in this more general case of potentials. That is, if a sequence 
$z_n \rightarrow \zeta$ in $\mathbb{C}$ and discrete eigenvalue $\kappa(z_n)=-\lambda^2(z_n)$ of $H(z_n)$ converges to $\kappa(\zeta)=-\lambda^2(\zeta)$, 
then $\kappa(\zeta)$ is either a discrete eigenvalue or a spectral singularity of the operator $H(\zeta)$.

Suppose $\phi(z_n,x)$ be the corresponding unique eigenfunction that satisfies Equations~\ref{eqnUniformBound}, ~\ref{eqnEquicontinuity} and $\phi(z_n,x)\text{e}^{\lambda(z_n) x} \rightarrow 1$. Since 
$|\phi(z_n,x)|\le |\phi(z_n,x)-\text{e}^{-\lambda(z_n) x}| + \text{e}^{-\text{Re}(\lambda(z_n))x}$ and $\text{Re}(\lambda(z_n)) > 0$, it follows from the 
estimate (\ref{eqnUniformBound}) that $\{\phi(z_n,x)\}$ is a uniformly bounded sequence. In similar lines estimate (\ref{eqnEquicontinuity}) 
ensures that $\{\phi_x(z_n,x)\}$ is uniformly bounded or the sequence $\{\phi(z_n,x)\}$ is equicontinuous. Therefore by Arzel\`a-Ascoli theorem 
$\phi(z_n,x) \rightarrow \phi(\zeta,x)$ uniformly on any compact subset of $[0,\infty)$ and we will have 
$\phi(\zeta,x)\text{e}^{\lambda(\zeta) x} \rightarrow 1$, $-\phi''(\zeta,x)+(V_0(x) + \zeta V_1(x))\phi(\zeta,x)=-\lambda^2(\zeta)\phi(\zeta,x)$, 
$\phi(\zeta,0)=0$. Thus if $\text{Re}(\lambda(\zeta))>0$ then $\kappa(\zeta)$ is an eigenvalue otherwise (that is, if $\text{Re}(\lambda(\zeta))=0$) it is a spectral singularity.
\end{proof}
The following result can be proved the same way as it is done in the previous section.
\begin{corollory}
Let $M=\sup{\{|V_1(x)|: x \in [0,\infty)\}}$, $\kappa_0$ be a discrete eigenvalue of the self-adjoint operator $H(0)=-{\frac{d^2}{dx^2}}+V_0$. Let $\gamma$ be a path in $\mathbb{R}$, say $\gamma(t)=t$, and $\kappa(t)$ be the path starting at $\kappa_0$ traced by discrete eigenvalues, or spectral singularities or resonances of $H(t)$ as $t$ varies over the real line starting from $0$. Then for $|t| < |\kappa_0|/{M}$, $\kappa(t)$ remains to be a discrete eigenvalue of $H(t)$. In particular, if $\kappa_0$ is the discrete eigenvalue of $H(0)$ nearest to $0$ then 
$$|\sigma_{\text{d}}(H(t))| \ge |\sigma_{\text{d}}(H(0))|$$ for real $t$ with $t < |\kappa_0|/{M}$.
\end{corollory}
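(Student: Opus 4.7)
The plan is to mirror the proof given for the compactly supported case, replacing the domain $[0,a]$ by $[0,\infty)$ and substituting the estimates (\ref{eqnUniformBound})--(\ref{eqnEquicontinuity}) for the role played earlier by compact support. The guiding idea is still: evaluate $\kappa(t_0)-\kappa(0)$ as an integral of $\kappa'(s)$ and show that this integral cannot be as large as $|\kappa_0|$ unless $|t_0|\ge|\kappa_0|/M$.

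First I would note that for $t\in\mathbb{R}$ the operator $H(t)$ is self-adjoint, so its discrete eigenvalues lie on the negative real axis and (by Lemma~\ref{Ch2:Lemma:Multiplicity}) are all simple. Consequently the branch $\kappa(t)$ issuing from $\kappa_0$ is real-analytic in $t$ until it leaves the discrete spectrum, at which point by the version of Theorem~\ref{theorem1} established in Section~\ref{Ch2:Sec:GeneralResult} (via the Arzel\`a--Ascoli argument based on (\ref{eqnUniformBound})--(\ref{eqnEquicontinuity})) it must terminate at a point of $[0,\infty)$; by continuity and reality this means $\kappa(t_0)=0$ at some first $t_0>0$ (we treat $t>0$; the case $t<0$ is symmetric). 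On the open interval $(0,t_0)$ the derivative formula
\[
\kappa'(s)=\frac{\int_0^\infty V_1(x)\phi^2(s,x)\,dx}{\int_0^\infty \phi^2(s,x)\,dx}
\]
applies, where $\phi(s,\cdot)$ is the $L^2$-eigenfunction of $H(s)$ associated with $\kappa(s)$.

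Next I would normalize $\phi(s,\cdot)$ so that $\int_0^\infty \phi^2(s,x)\,dx=1$ (which is legitimate since $\phi(s,\cdot)\in L^2$ as $\mathrm{Re}\,\lambda(s)>0$), giving $\kappa'(s)=\int_0^\infty V_1(x)\phi^2(s,x)\,dx$, and estimate
\[
|\kappa'(s)|\le \sup_{x\ge 0}|V_1(x)|\int_0^\infty \phi^2(s,x)\,dx = M.
\]
Integrating from $0$ to $t_0$,
\[
|\kappa_0|=|\kappa(t_0)-\kappa(0)|=\left|\int_0^{t_0}\kappa'(s)\,ds\right|\le M\,|t_0|,
\]
so $|t_0|\ge|\kappa_0|/M$. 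Hence for every real $t$ with $|t|<|\kappa_0|/M$, the path $\kappa(t)$ has not yet reached $[0,\infty)$, and therefore $\kappa(t)$ is still a discrete eigenvalue of $H(t)$. The ``in particular'' clause is then immediate: if $\kappa_0$ is the eigenvalue of $H(0)$ closest to $0$, every eigenvalue of $H(0)$ persists as a discrete eigenvalue of $H(t)$ for such $t$, so $|\sigma_{\text{d}}(H(t))|\ge|\sigma_{\text{d}}(H(0))|$ (counting multiplicities, which are all one by Lemma~\ref{Ch2:Lemma:Multiplicity}).

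The main obstacle, compared with the compactly supported setting, is justifying that the derivative formula and its bound are still available: one has to check that $\phi(s,\cdot)$ and $V_1\phi^2(s,\cdot)$ are integrable uniformly enough in $s$ on $(0,t_0)$ so that $s\mapsto\kappa'(s)$ is integrable and $\kappa(t_0)-\kappa_0=\int_0^{t_0}\kappa'(s)\,ds$. This is precisely where the decay estimate (\ref{eqnUniformBound}), which guarantees $\phi(s,\cdot)\in L^2$ with uniformly bounded norm on compact parameter sets and analyticity in $s$ of the normalized eigenfunction, takes the place of compact support used in the previous section; the rest of the argument then transcribes verbatim.
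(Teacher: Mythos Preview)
Your proposal is correct and follows exactly the approach the paper intends: the paper's own ``proof'' of this corollary is simply the sentence ``The following result can be proved the same way as it is done in the previous section,'' and what you have written is precisely that earlier argument (integrate the derivative formula $\kappa'(s)=\int V_1\phi^2$ for the normalized eigenfunction, bound by $M$, conclude $|\kappa_0|\le M|t_0|$), now carried out over $[0,\infty)$ with the decay estimates (\ref{eqnUniformBound})--(\ref{eqnEquicontinuity}) replacing compact support. If anything, you supply more justification than the paper does for why the transcription is legitimate.
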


\section{An Example}\label{Ch2:Sec:Example}
In this section we demonstrate the continuous movement of resonances and discrete spectrum of Schr\"odinger operators with potentials that are constant on their support. Consider the potential $V$ defined on $[0,\infty)$ by
 \begin{equation} V=\left\{ 
\begin{array}{ll}
-k^2  & \text{on} ~ [0,1] \\
~~~0  & \text{elsewhere} \\
\end{array}
\right.
\label{Eqn:Potential}
\end{equation}
Let $\kappa=-\lambda^2$ be an eigenvalue of the Schr\"odinger operator $H=-\frac{d^2}{dx^2}+V$ with domain $\text{Dom}(H)=\{f: f,f',f'' \in L^2(0,\infty), f(0)=0\}$, then there exists $\phi$ such that $H \phi = -\lambda^2 \phi$, $\phi(0)=0$, and $\phi,~\phi',~\phi'' \in L^2(0,\infty)$. All these conditions imply that $\text{Re}(\lambda) > 0$ and
\begin{equation}
f(\lambda)=\lambda \sin(\sqrt{k^2-\lambda^2}) + \sqrt{k^2-\lambda^2} cos(\sqrt{k^2-\lambda^2})=0
\label{Eqn:Characteristic}
\end{equation}
which is the characteristic equation for the eigenvalue problem of the given operator. Also note that the spectral singularities and resonances of the given Schr\"{o}dinger operator are also obtained from the above equation. They are  $\kappa=-\lambda^2$ where $\lambda$ satisfies the characteristic equation (other than $\pm k$) with $\text{Re}(\lambda)=0$ (spectral singularity)and $\text{Re}(\lambda)<0$ (resonance).

If the value of $V$ on $[0,1]$ is real, then the operator is a self-adjoint operator and all its eigenvalues are real and complex resonances exist in symmetric pairs. Consider a complex value $-k^2 + \zeta(t)$ on $[0,1]$ for the potential $V$, where $\zeta(t)$ is a continuous path in $\mathbb{C}$. By Theorem~\ref{theorem1}, as $t$ moves over the real line the eigenvalues or resonances of the operator move continuously in the complex plane and an eigenvalue moves to the resonance set through the positive real axis $[0,\infty)$ (which is the essential spectrum of the operator) and vice versa.

In particular, if we assume the value $-k^2+t$ ($k^2$ is real) for $V$ on $[0,1]$, as $t$ varies over the real line the eigenvalues analytically move over the negative real axis until they touch $0$ and move to the resonance set. A complex pair of symmetric resonances traces a pair of symmetric curves until they meet at the real axis and at this meeting point $f'(\lambda)=0$. That is
\begin{equation}
(\lambda+1)\left[ \sin(\sqrt{k^2-\lambda^2})-\frac{\lambda}{\sqrt{k^2-\lambda^2}} \cos(\sqrt{k^2-\lambda^2}) \right]=0
\end{equation}
This gives either $\lambda=-1$ or
$$\sqrt{k^2-\lambda^2} \sin(\sqrt{k^2-\lambda^2}) - \lambda \cos(\sqrt{k^2-\lambda^2})=0$$
The above expression along with the characteristic equation $f(\lambda)=0$ implies that $k=0$ and $\exp(\lambda)=0$. Thus the symmetric complex resonances meet in $\mathbb{R}$ at $-\lambda^2=-1$ and at this point characteristic equation becomes
\begin{equation}
\tan(\sqrt{k^2-1})=\sqrt{k^2-1}
\end{equation}
\emph{That is}, a pair of complex resonances meet at $-1$ as $t$ varies over real line and at this meeting point the potential takes the value $-k^2+t=-(\theta^2+1)$ on $[0,1]$, where $\theta$ is a solution of the equation $\tan{\theta} = \theta$. Each interval $[ n \pi, (2 n + 1) \pi/2]$ for $n=0,1,2,\ldots$ contains a solution $\theta_n$ of $\tan{\theta}=\theta$. But as $-k^2+t \rightarrow -1=-(\theta_0^2+1)$ it can be seen that the only real resonance (antibound state) of the operator reaches $-1$. As $t$ moves further to the negative side, different complex symmetric pairs of resonances  move symmetrically with respect to the real axis and meet at $-1$ as the value of the potential on $[0,1]$ becomes $-k^2+t=-(\theta_n^2+1), n=1,2,\ldots$. This is shown in Figure~\ref{Fig:resonanceEvolution}. This figure is zoomed about $-1$ and is shown in Figure~\ref{Fig:resonanceEvolutionZoom}. These figures show the movement of few symmetric pairs of resonances as the potential takes the value $-0.5+t$ on $[0,1]$, $t$ varies from $0$ to the negative side of the real line. The movement of the symmetric curve is restricted in these figures and is shown up to their meeting point $-1$ on the real line. As $t$ moves further to the negative side, this symmetric pair of resonances once met at $-1$ keep moving in real line, one towards the negative side and the other towards the positive side. The resonance moving towards the positive side meets the positive real axis at $0$ and re-bounces back as an eigenvalue. This is illustrated in Figure~\ref{Fig:resonanceToeigenvalue}. The potential at which it meets the real axis is obtained from the characteristic equation by substituting $\lambda=0$ in it and the corresponding potential is $-[(2 n + 1) \pi / 2]^2$, for $n=0,1,2,\ldots$. Thus we have the following:

\begin{proposition}\label{proposition1}
Let $n \in \mathbb{N}$. If the potential $V$ is a real constant, say $-k^2$, on its support $[0,1]$ and $[(2n - 1)\pi/2]^2 < k^2 \leq [(2n + 1) \pi/2]^2$ then the above Schr\"{o}dinger operator has exactly $n$ eigenvalues. If $K_n=\theta_n^2 +1$ where $\theta_n$ is the root of $\tan{\theta}=\theta$ in the interval $[n \pi, (2n+1) \pi/2]$ then the above Schr\"{o}dinger operator has exactly $n-1$ antibound states if $[(2n-1)\pi/2]^2 < k^2<K_n$ and $n+1$ antibound states if $K_n\le k^2 \leq [(2n+1)\pi/2]^2 $.
\end{proposition}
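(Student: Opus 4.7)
The plan is to treat $k>0$ as a continuous parameter and track the real zeros of the characteristic function
\[
f(\lambda,k)=\lambda\sin\sqrt{k^2-\lambda^2}+\sqrt{k^2-\lambda^2}\,\cos\sqrt{k^2-\lambda^2}
\]
as $k$ grows from $0$. By Theorem~\ref{theorem1} and the analyticity statements recalled in Section~\ref{Ch2:Sec:Preliminaries}, each eigenvalue, antibound state, spectral singularity, or complex resonance is a branch of an analytic function $\lambda(k)$ off isolated algebraic singularities, and the number of real branches can change only when (i) a branch meets the essential spectrum at $\lambda=0$, or (ii) two real branches collide at a point where $f(\lambda,k)=f_\lambda(\lambda,k)=0$ and split into a complex conjugate pair (or vice versa). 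The strategy is to locate all such critical $k$-values, fix the direction of each transition, and then count by induction in $k$.

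For event (i), $f(0,k)=k\cos k$, so the critical values are $k=(2m+1)\pi/2$, $m\ge 0$. At each such $k_0$ the implicit function theorem produces a $C^1$ branch $\lambda(k)$ passing transversally through $0$, and either a direct sign computation of $\partial\lambda/\partial k$ at $(0,k_0)$ or the standard monotonicity in $k$ of the bound-state count of the self-adjoint operator $-d^2/dx^2-k^2\chi_{[0,1]}$ shows that this branch moves from $\lambda<0$ to $\lambda>0$ as $k$ crosses $k_0$ upward, converting an antibound state into a bound state. A direct inspection shows that $f$ has no positive real zero for $k<\pi/2$: both terms of $f(\lambda,k)$ are strictly positive when $0<\lambda<k<\pi/2$, and for $\lambda>k$ real the function $f$ becomes $\pm i$ times the strictly positive quantity $\lambda\sinh\sqrt{\lambda^2-k^2}+\sqrt{\lambda^2-k^2}\,\cosh\sqrt{\lambda^2-k^2}$. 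Combined, these give exactly $n$ bound states when $k^2\in\bigl(((2n-1)\pi/2)^2,((2n+1)\pi/2)^2\bigr]$.

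For event (ii), the joint system $f=f_\lambda=0$ has already been reduced in Section~\ref{Ch2:Sec:Example} to $\lambda=-1$ together with $\tan\sqrt{k^2-1}=\sqrt{k^2-1}$, i.e., $k^2=K_n=\theta_n^2+1$ for $n\ge 1$. I would expand $f$ to second order near $(\lambda,k)=(-1,\sqrt{K_n})$ and read the direction of the collision off the signs of $f_k$ and $f_{\lambda\lambda}$ at that point; the expected conclusion, consistent with the figures of Section~\ref{Ch2:Sec:Example}, is that as $k^2$ increases through $K_n$ a symmetric pair of complex resonances in $\operatorname{Re}\lambda<0$ merges at $\lambda=-1$ and emerges as two real antibound states that then separate along the negative real $\lambda$-axis. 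For the base case, a separate direct analysis in the regime $\lambda<-k$ (where $f=0$ reduces to $\lambda=-s\coth s$ with $k=s/\sinh s$) shows that exactly one antibound state is present for $0<k<\pi/2$, passing smoothly through $(\lambda,k)=(-1,1)$ from $\lambda<-k$ into $-k<\lambda<0$ without any change in count. Stringing the transitions together, each crossing of $k=(2n+1)\pi/2$ converts one antibound state into a bound state and each crossing of $k^2=K_n$ adds two new antibound states, producing $n-1$ antibound states when $((2n-1)\pi/2)^2<k^2<K_n$ and $n+1$ when $K_n\le k^2\le((2n+1)\pi/2)^2$, as claimed. The main obstacle will be the direction analysis at (ii): one has to verify that each collision at $K_n$ creates (rather than annihilates) real antibound states and that the reduction in Section~\ref{Ch2:Sec:Example} exhausts all real coincidence events on the negative $\lambda$-axis, so that no unaccounted merger off the real line can disturb the count.
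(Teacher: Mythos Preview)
Your proposal is correct and follows essentially the same route as the paper: the proposition there is not given a separate proof but is stated as a summary of the preceding discussion in Section~\ref{Ch2:Sec:Example}, which tracks the real zeros of the characteristic function as the depth of the well varies, locates the crossings $\lambda=0$ at $k^{2}=[(2m+1)\pi/2]^{2}$ and the collisions at $\lambda=-1$, $k^{2}=K_{n}$, and reads off the direction of each transition (largely from the figures). Your write-up is in fact more systematic than the paper's own treatment---the base-case analysis via $\lambda=-s\coth s$, $k=s/\sinh s$ and the explicit acknowledgment that the sign of $f_{\lambda\lambda}$ at $(-1,\sqrt{K_{n}})$ must be checked are points the paper handles only descriptively---so the ``main obstacle'' you flag is real but is not something the paper resolves more carefully than you do.
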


\begin{theorem}\label{Ch2:Theorem:EigenvalueCount}
Let $V$ be a real potential with compact support $[0,1]$. Suppose that there exists $m < n$ such that $-[(2n - 1)\pi/2]^2 < V < -[(2m - 1) \pi/2]^2$ on $[0,1]$. Then $$m < |\sigma_{\text{d}}(H)| < n$$ where $H=-\frac{d^2}{dx^2}+V$ is the self-adjoint Schr\"odinger operator.
\end{theorem}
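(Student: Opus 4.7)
The plan is to sandwich $V$ between two constant-potential comparison operators whose discrete eigenvalue counts are already pinned down by Proposition~\ref{proposition1}, and then transfer the count to $H$ via the self-adjoint comparison principle combined with the continuous-movement analysis of Section~\ref{Ch2:Sec:Evolution}.

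First I would introduce the bracketing constants
\begin{equation*}
V_a(x) = -\left(\tfrac{(2n-1)\pi}{2}\right)^{2}\chi_{[0,1]}(x), \qquad V_b(x) = -\left(\tfrac{(2m-1)\pi}{2}\right)^{2}\chi_{[0,1]}(x),
\end{equation*}
so that the hypothesis reads $V_a(x) < V(x) < V_b(x)$ pointwise on $[0,1]$. Proposition~\ref{proposition1} gives the exact eigenvalue count for $H_a = -d^2/dx^2 + V_a$ and $H_b = -d^2/dx^2 + V_b$; since the values $k^{2}=((2n-1)\pi/2)^{2}$ and $k^{2}=((2m-1)\pi/2)^{2}$ sit at the right endpoints of Proposition~\ref{proposition1} intervals, each of $H_a$ and $H_b$ carries, in addition to its negative eigenvalues, a spectral singularity at the threshold $0$ that is on the verge of becoming a new discrete eigenvalue as $k^{2}$ increases.

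Because $V$ is real the operator $H$ is self-adjoint, so the pointwise bound $V_a\le V\le V_b$ translates via min-max into $\lambda_k(H_a)\le\lambda_k(H)\le\lambda_k(H_b)$ for every $k$, where $\lambda_k$ is the $k$-th min-max value; counting strictly negative indices yields the non-strict bound $|\sigma_{\text{d}}(H_b)|\le|\sigma_{\text{d}}(H)|\le|\sigma_{\text{d}}(H_a)|$. The same conclusion can be obtained from the analytic-family machinery of Section~\ref{Ch2:Sec:Evolution} by following the straight-line homotopies $V_a+t(V-V_a)$ and $V_b+t(V-V_b)$ for $t\in[0,1]$, along which the derivative formula $\kappa'(t)=\int (V-V_\star)\phi^{2}/\int\phi^{2}$ fixes the sign of the eigenvalue motion and no new eigenvalue can appear except by crossing the threshold $0$.

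The main obstacle, and where I expect the real work to lie, is upgrading the two non-strict inequalities to the strict inequalities $m<|\sigma_{\text{d}}(H)|<n$. The strict comparison $V<V_b$ pointwise is precisely the condition needed to apply Theorem~\ref{theorem1} to the homotopy $V_b+t(V-V_b)$ and force the threshold spectral singularity of $H_b$ to detach from $0$ into the open negative real axis, becoming a genuine negative discrete eigenvalue of $H$ and yielding the strict lower inequality; symmetrically, the strict comparison $V>V_a$ keeps the about-to-appear threshold eigenvalue of $H_a$ from persisting as a discrete eigenvalue of $H$, supplying the strict upper inequality. Making this precise requires transferring the threshold-crossing behaviour observed for constant potentials in the example of Section~\ref{Ch2:Sec:Example} to an arbitrary (non-constant) $V$ satisfying the stated bounds, and verifying that the strict pointwise gap $V_b-V>0$ (respectively $V-V_a>0$) does not degenerate against the normalized eigenfunctions along the homotopy; this is the step I would expect to consume the most effort.
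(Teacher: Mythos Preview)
Your approach matches the paper's exactly: its entire proof is the single sentence ``This is an immediate consequence of Theorem~\ref{theorem1} and Proposition~\ref{proposition1},'' which are precisely the two ingredients you invoke. Your elaboration (the bracketing constants $V_a,V_b$, the min-max alternative, and especially the threshold-strictness discussion) goes well beyond what the paper actually supplies; the paper does not address those details at all.
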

\begin{proof}
This is an immediate consequence of Theorem~\ref{theorem1} and Proposition~\ref{proposition1}.
\end{proof}

If the potential is constant on the support $[0,1]$, then \cite{Frank2016} provides the following estimate $$|\sigma_{\text{d}}(H)|<\frac{|V|^2}{\epsilon^2}\left({\frac{\text{e}^\epsilon-1}{\epsilon}}\right)^2$$ where $V$ is the constant value on the support. The minimum value of $$\frac{1}{\epsilon^2}\left({\frac{\text{e}^\epsilon-1}{\epsilon}}\right)^2$$ can be found out numerically, $\approx 2.38436418$. Thus the estimate reduces to $$|\sigma_{\text{d}}(H)|<2.38436418~|V|^2.$$ But if $V$ is real and constant, we can use the result for a self-adjoint operator and obtain a better estimate $$|\sigma_{\text{d}}(H)|<\int\limits_0^\infty{x V_{-}(x) dx}=\frac{|V|}{2}.$$ Whereas Theorem~\ref{Ch2:Theorem:EigenvalueCount} gives us $$|\sigma_{\text{d}}(H)|= \text{integer part of}\left(\frac{1}{\pi}\sqrt{|V|}+\frac{1}{2}\right).$$ Or for a potential $V$ which is continuous and bounded on its support $[0,1]$, $$ \frac{1}{\pi}\sqrt{\text{min}(V_{-})}+\frac{1}{2}<|\sigma_{\text{d}}(H)| < \frac{1}{\pi}\sqrt{\text{max}(V_{-})}+\frac{1}{2}$$ where $\min(V_{-}), \max(V_{-})$ are minimum and maximum of the negative part of $V$ on its support $[0,1]$. Thus it is easy to see that if the variation of potentials on its support is minimal then Theorem~\ref{Ch2:Theorem:EigenvalueCount} provides better estimate.

\begin{figure}
\centering
\includegraphics[width=0.9\textwidth]{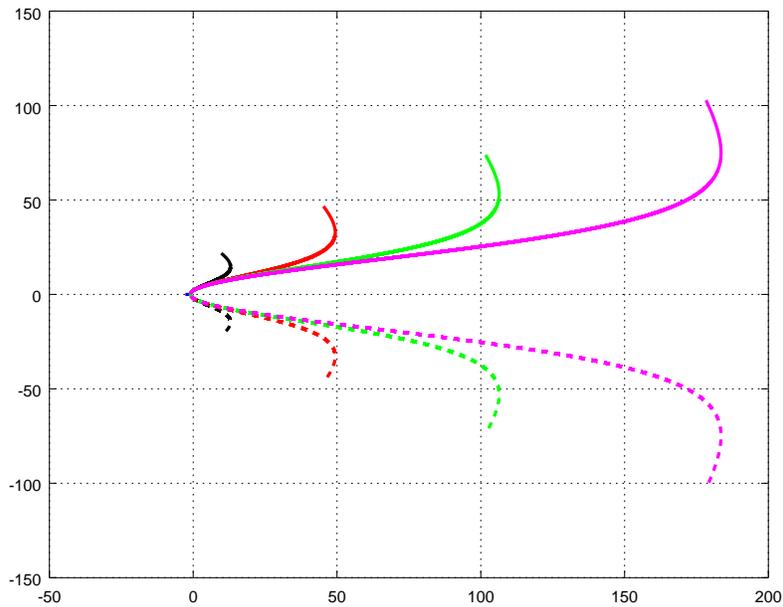}
\caption{Evolution of resonances of the given Schr\"{o}dinger operator. The potential starts at $-0.5$ and decreases further. The blue curve represents the real resonace which moves right and becomes $-1$ at $k=-1=-(\theta_0^2 +1)$, $\theta_0=0$ is the first non-negative solution of $\tan{\theta}=\theta$. The black, red, green and magenta curves are the evolution of the four sets of symmetric resonances close to the origin. Each of these meet the real axis at $-1$ at potentials $-(\theta_1^2+1), -(\theta_2^2+1), -(\theta_3^2+1)$ and $-(\theta_4^2 +1)$  respectively where $\theta_n$ is the solution of $\tan{\theta}=\theta$ in the interval $[n \pi, (2n+1) \pi/2]$, $n=1, 2, 3, 4$ }
\label{Fig:resonanceEvolution}
\end{figure}

\begin{figure}
\centering
\includegraphics[width=0.9\textwidth]{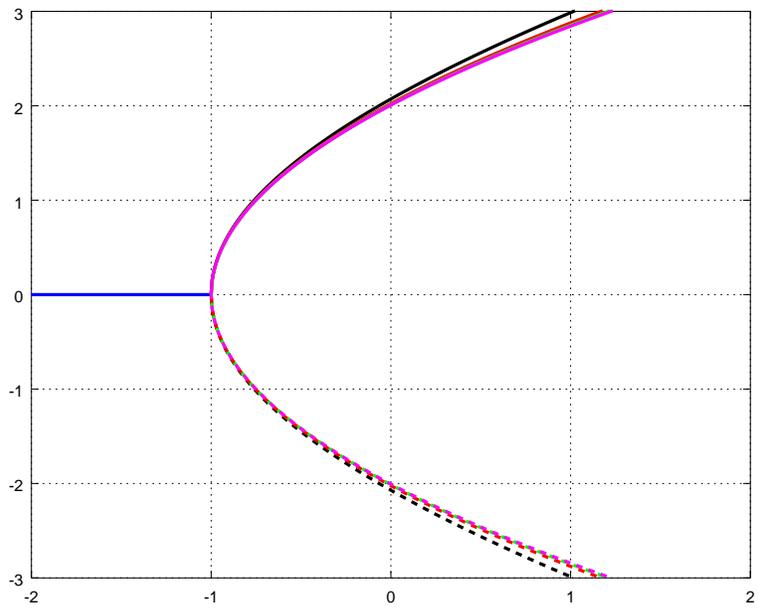}
\caption{A zoomed version of the Figure~\ref{Fig:resonanceEvolution} about the origin.}
\label{Fig:resonanceEvolutionZoom}
\end{figure}
\begin{figure}
\centering
\includegraphics[width=0.9\textwidth]{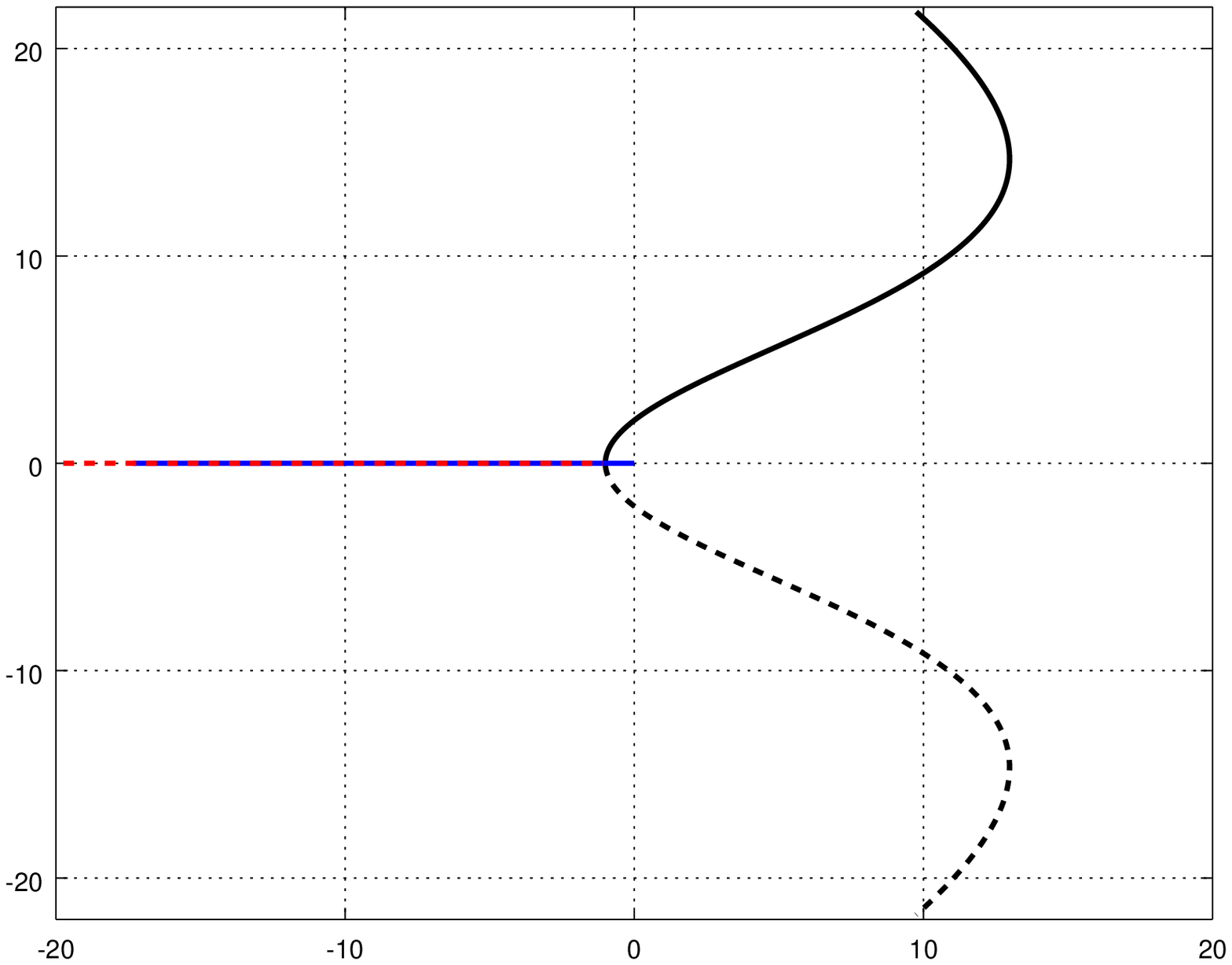}
\caption{Evolution of resonance into eigenvalue of the Schr\"{o}dinger operator. The black curves (solid and dashed) indicate the evolution of two complex symmetric resonances that meet at $-1$ as the real potential decreases to $-(\theta^2+1)$, where $\theta$ satisfies $\tan{\theta}=\theta$. Then they separate out and move in opposite directions along the real axis. The one (blue curve) moving to the positive direction meets the positive real axis at $0$, as potential takes the value $-[(2n+1) \pi/2]^2$and bounces back as an eigenvalue which moves further to the negative side as potential decreases further. The other (red dashed line) that moves to the negative direction continues as a real resonance (antibound state).}
\label{Fig:resonanceToeigenvalue}
\end{figure}

Now we go back to the example and demonstrate the evolution of eigenvalues or resonances as the imaginary part of $-k^2$ continuously moves and traces vertical lines in the complex plane. For an example, the value of the potential on $[0,1]$ is initially taken as $-22$. By Proposition~\ref{proposition1}, the corresponding self-adjoint Schr\"odinger operator has one eigenvalue and two antibound states on the real line. Consider these bound and antibound states and the symmetric pair of complex resonances that are close to the origin. The evolution of these eigenvalues and resonances of this operator as $-k^2$ varies from $-22$ to $-22+250i$ or $-22-250i$ are shown in Figure~\ref{Fig:evolution}. It is observed that the eigenvalue of the self-adjoint operator remains to be an eigenvalue as the imaginary part of $-k^2$ varies from $0$ to the positive or negative side. The same thing happens to one of the real resonances (which is less than $-1$). Whereas the other resonance (on the right side of $-1$) traces a curve which crosses $[0,\infty)$ and changes to an eigenvalue as the imaginary part varies from $0$ to $250$ or $-250$ (see Figure~\ref{Fig:evolutionZoom} which is a zoomed version of Figure~\ref{Fig:evolution} about the origin). One of the complex resonances changes its status to eigenvalue and the other remains to be a resonance as the imaginary part of $-k^2$ changes from $0$ to the positive or negative side.
\begin{figure}
\centering
\includegraphics[width=0.9\textwidth]{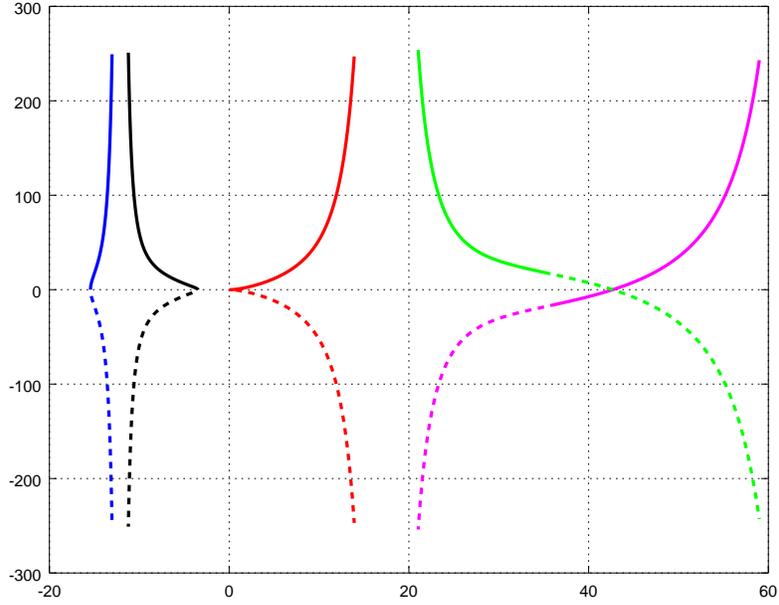}
\caption{The evolution of the negative eigenvalue ($\approx-15.42901680$), two antibound states ($\approx -0.01187978, -3.48239885$) and a pair of complex resonances ($\approx 35.73924059 + 16.82276560i, 35.73924059 - 16.82276560i$) of the self-adjoint operator with potential equals to $-22$ on its support $[0,1]$ as the imaginary part of the potential varies from $0$ to $250$ and to $-250$ on the support $[0,1]$. The dashed curve corresponds to the variation from $0$ to $-250$. The eigenvalue (blue curve) remains to be an eigenvalue, one of the negative resonance (black curve) remains as a resonance while the other resonance (red curve) crosses $[0,\infty)$ and moves to discrete spectrum. One resonance in the pair of complex resonances crosses $[0,\infty)$ and moves to the discrete spectrum while the other remains to be a resonance as the imaginary part varies from $0$ to the positive side or to the negative side.}
\label{Fig:evolution}
\end{figure}

\begin{figure}
\centering
\includegraphics[width=0.9\textwidth]{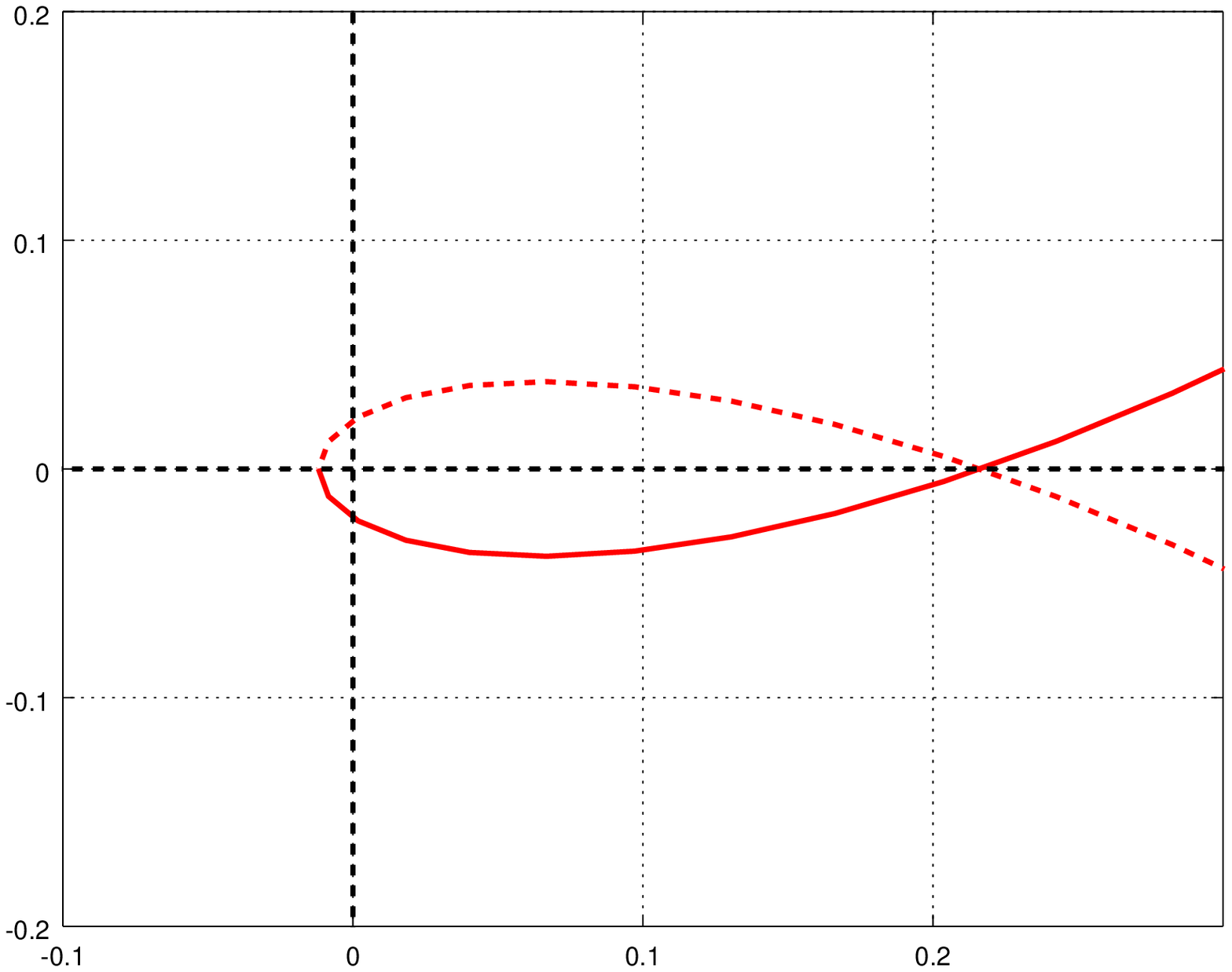}
\caption{A zoomed version of Figure~\ref{Fig:evolution} about the origin. The evolution of the negative resonance on the right side of $-1$ as the imaginary part varies from $0$ to positive or negative values. The resonance crosses $[0,\infty)$ and moves to the discrete spectrum of the corresponding Schr\"odinger operator.}
\label{Fig:evolutionZoom}
\end{figure}

\paragraph{Finding resonance for positive potential.} It is evident from Equation~\ref{Eqn:Characteristic} that if the value of $k$ is zero or for the free Schr\"odinger operator, there is no eigenvalue or resonance in the complex plane. As the value of $-k^2$ varies from nonzero to zero, all the eigenvalues and resonances of the operator diverge to $\infty$. Thus to find evolution of resonances or eigenvalues of Schr\"odinger operators as the potential changes from $U\neq 0$ to another $V\neq 0$, one should choose a path so that for any $z$ in the path, $H(z)$ does not become the free Schr\"odinger operator. For example, consider the variation of $-k^2$ in Equation~\ref{Eqn:Potential} from $-0.5$ to $1$. If we choose the path along the real line, then at $0$ the corresponding operator reduces to free Schr\"odinger operator. Thus it is required to consider a different path to get the evolution of the resonances. For example, if we consider the polygonal path $-0.5\rightarrow -0.5+i \rightarrow 1+i \rightarrow 1$, then it may be possible to find the evolution of the resonances. The evolution of the antibound state of the operator with $-k^2=-0.5$ to a resonance of the operator with $-k^2=1$ along this path is shown in Figure~\ref{Fig:evolutionAntiboundState}.

\paragraph{Finding eigenvalue for potential with non-negative real part.} There does not exist any eigenvalue for self-adjoint Schr\"odinger operator with non-negative potential. But if an imaginary part is added to the potential then the corresponding nonself-adjoint Schr\"odinger operator can have discrete eigenvalues. If we want to find out these eigenvalues as an evolution of some negative eigenvalues of some self-adjoint Schr\"odinger operator, a similar idea explained in the above paragraph can be employed. Here first a sufficiently negative potential is considered and its eigenvalues are estimated. Then imaginary part is introduced continuously to the potential and finally the real part of the potential is increased to the required value. For example, consider the potential in Equation~\ref{Eqn:Potential} and suppose we want to find out one eigenvalue when $-k^2$ takes the value $10+5i$. Initially $-k^2$ is assumed a negative value of $-10$ then it is changed along the polygonal path $-10\rightarrow -10+5i \rightarrow 10+5i$. The evolution of the single discrete eigenvalue of the self-adjoint operator is shown in Figure~\ref{Fig:evolutionEigenvalue}.

\begin{figure}
\centering
\includegraphics[width=0.9\textwidth]{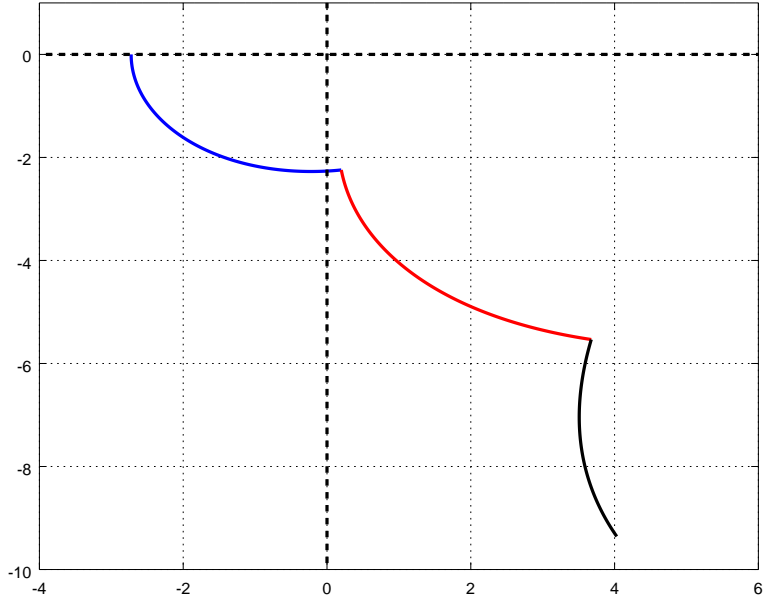}
\caption{Evolution of a resonance as the value of $-k^2$ varies from $-0.5$ to $-0.5+i$ (blue curve) then to $1+i$ along real line (red curve) and finally to $1$ along imaginary line. The antibound state ($\approx -1.65056781$) of the Schrodinger operator with potential equal to $-0.5$ on its support $[0,1]$ becomes a resonance ($\approx 4.02995187 - 9.35784001i$) of the the operator with potential equal to $1$ on its support.}
\label{Fig:evolutionAntiboundState}
\end{figure}

\begin{figure}
\centering
\includegraphics[width=0.9\textwidth]{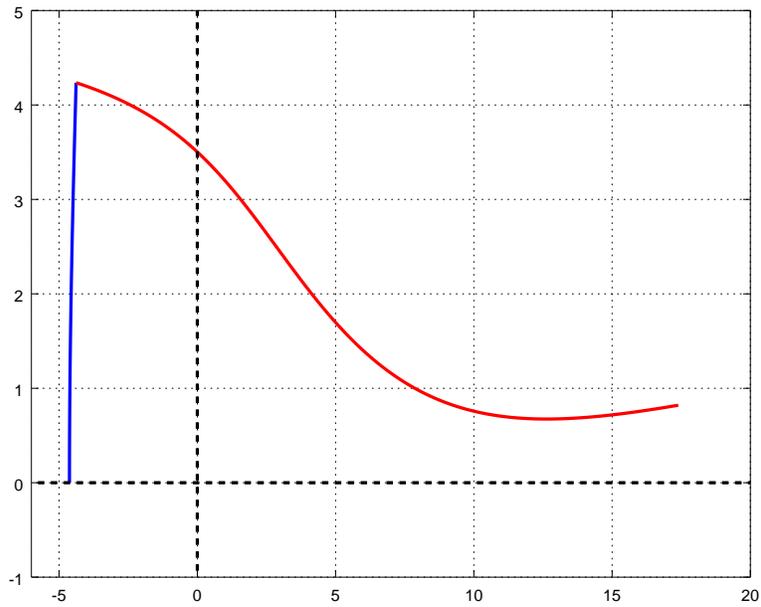}
\caption{Evolution of the negative eigenvalue of the self-adjoint Schr\"odinger operator as the value of $-k^2$ in Equation~\ref{Eqn:Potential} varies from $-10$ to $-10+5i$ and then to $10+5i$. Initially the discrete eigenvalue was $\approx -4.62419409$, it gets evolved and becomes $\approx 17.39128151 +  0.82067130i$ as $-k^2$ reaches the value of $10+5i$.}
\label{Fig:evolutionEigenvalue}
\end{figure}

\bibliographystyle{plainnat}
\bibliography{schrodinger}

\begin{thebibliography}{8}
\providecommand{\natexlab}[1]{#1}
\providecommand{\url}[1]{\texttt{#1}}
\expandafter\ifx\csname urlstyle\endcsname\relax
  \providecommand{\doi}[1]{doi: #1}\else
  \providecommand{\doi}{doi: \begingroup \urlstyle{rm}\Url}\fi

\bibitem[Abramov et~al.(2001)Abramov, Aslanyan, and Davies]{Abramov2001}
A.~A. Abramov, A.~Aslanyan, and E.~B. Davies.
\newblock Bounds on complex eigenvalues and resonances.
\newblock \emph{J. Phys. A}, 34(1):\penalty0 57--72, 2001.

\bibitem[Agmon(1998)]{Agmon1998}
S.~Agmon.
\newblock A perturbation theory of resonances.
\newblock \emph{Communs on Pure and Appl. Maths}, LI:\penalty0 1255--1309,
  1998.

\bibitem[Agranovich and Marchenko(1959)]{Agranovich1959}
Z.~S. Agranovich and V.~A. Marchenko.
\newblock \emph{The inverse problem in the quantum theory of scattering}.
\newblock Fizmatgiz, 1959.

\bibitem[Frank et~al.(2016)Frank, Laptev, and Safronov]{Frank2016}
R.L. Frank, A.~Laptev, and O.~Safronov.
\newblock On the number of eigenvalues of schr{\"o}dinger operators with
  complex potentials.
\newblock \emph{J. Lond. Math. Soc. (2)}, 94(2):\penalty0 377--390, 2016.

\bibitem[Hirsch and Smale(1974)]{Hirsch1974}
M.~W. Hirsch and S.~Smale.
\newblock \emph{Differential equations, dynamical systems and linear algebra}.
\newblock Academic Press Inc., New York, 1974.

\bibitem[Kato(1980)]{Kato1980}
T.~Kato.
\newblock \emph{Perturbation theory for linear operators}.
\newblock Springer-Verlag, Berlin Heidelberg, 1980.

\bibitem[Pavlov(1967)]{Pavlov1967}
B.~S. Pavlov.
\newblock \emph{The Nonself-Adjoint Schr{\"o}dinger Operator}, pages 87--114.
\newblock Springer US, Boston, MA, 1967.

\bibitem[Sanchez(1979)]{Sanchez1979}
D.~A. Sanchez.
\newblock \emph{Ordinary differential equations and stability theory: An
  introduction}.
\newblock Dover Publications, Inc., New York, 1979.

\end{thebibliography}


\begin{thebibliography}{7}
\providecommand{\natexlab}[1]{#1}
\providecommand{\url}[1]{\texttt{#1}}
\expandafter\ifx\csname urlstyle\endcsname\relax
  \providecommand{\doi}[1]{doi: #1}\else
  \providecommand{\doi}{doi: \begingroup \urlstyle{rm}\Url}\fi

\bibitem[Abramov et~al.(2001)Abramov, Aslanyan, and Davies]{abramov}
A.~A. Abramov, A.~Aslanyan, and E.~B. Davies.
\newblock Bounds on complex eigenvalues and resonances.
\newblock \emph{J. Phys. A}, 34(1):\penalty0 57--72, 2001.

\bibitem[Agmon(1998)]{agmon}
S.~Agmon.
\newblock A perturbation theory of resonances.
\newblock \emph{Communs on Pure and Appl. Maths}, LI:\penalty0 1255--1309,
  1998.

\bibitem[Agranovich and Marchenko(1959)]{agranovich}
Z.~S. Agranovich and V.~A. Marchenko.
\newblock \emph{The inverse problem in the quantum theory of scattering}.
\newblock Fizmatgiz, 1959.

\bibitem[Hirsch and Smale(1974)]{hirsch}
Morris~W. Hirsch and Stephen Smale.
\newblock \emph{Differential equations, dynamical systems and linear algebra}.
\newblock Academic Press Inc., New York, 1974.

\bibitem[Kato(1980)]{kato1980}
Tosoi Kato.
\newblock \emph{Perturbation theory for linear operators}.
\newblock Springer-Verlag, Berlin Heidelberg, 1980.

\bibitem[Pavlov(1967)]{pavlov}
B.~S. Pavlov.
\newblock The non-selfadjoint schrodinger operator.
\newblock \emph{In the collection: Topics in Mathematical Physics}, Vol.
  1:\penalty0 87--114, 1967.

\bibitem[Sanchez(1979)]{sanchez}
David~A. Sanchez.
\newblock \emph{Ordinary differential equations and stability theory: An
  introduction}.
\newblock Dover Publications, Inc., New York, 1979.

\end{thebibliography}

\end{document}